\numberwithin{equation}{section}
\newcommand{\beg}{\begin{equation}}
\newcommand{\eeg}{\end{equation}}
\newcommand{\ben}{\begin{eqnarray*}}
	\newcommand{\een}{\end{eqnarray*}}
\newtheorem{thm}{Theorem}[section]
\newtheorem{cor}[thm]{Corollary}
\newtheorem{lem}[thm]{Lemma}
\newtheorem{prop}[thm]{Proposition}
\numberwithin{equation}{section} 
\theoremstyle{definition}
\newtheorem{defn}[thm]{Definition}
\newtheorem{eg}[thm]{Example}
\newcommand{\HS}{\mathcal H}
\newcommand{\D}{\mathbb{D}}
\newcommand{\A}{\rm{Aut}}
\newcommand{\ov}{\overline}
\begin{document}
	\title[On $q$-commuting co-extensions and $q$-commutant lifting]
	{On $q$-commuting co-extensions and $q$-commutant lifting}
	
	\author[Bisai, Pal and Sahasrabuddhe]{Bappa Bisai, Sourav Pal and Prajakta Sahasrabuddhe}
	\address[Bappa Bisai]{Stat-Math Unit, Indian Statistical Institute Kolkata, 203 B. T. Road, Kolkata - 700108, India.}
	\email{bappa.bisai1234@gmail.com}
	\address[Sourav Pal]{Mathematics Department, Indian Institute of Technology Bombay,
		Powai, Mumbai - 400076, India.} \email{sourav@math.iitb.ac.in}
	\address[Prajakta Sahasrabuddhe]{Mathematics Department, Indian Institute of Technology Bombay,
		Powai, Mumbai - 400076, India.} \email{prajakta@math.iitb.ac.in}

	\keywords{q-commuting operators, q-commutant lifting, q-commuting dilation.}
	
	\subjclass[2010]{47A13, 47A20, 47A25, 47A45}
	
	\thanks{The first named author has been supported by the Ph.D Fellowship of University Grand Commissions (UGC), India and the Visiting Scientist Fellowship of Indian Statistical Institute, Kolktata. The second named author
is supported by the Seed Grant of IIT Bombay, the CPDA and the
MATRICS Award (Award No. MTR/2019/001010) of
Science and Engineering Research Board (SERB), India. The third named author has been supported by the Ph.D Fellowship of Council of Scientific and Industrial Research (CSIR), India.}

	\begin{abstract}
	
Consider a nonzero contraction $T$ and a bounded operator $X$ satisfying $TX=qXT$ for a complex number $q$. There are some interesting results in the literature on $q$-commuting dilation and $q$-commutant lifting of such pair $(T,X)$ when $|q|=1$. Here we improve a few of them to the class of scalars $q$ satisfying $|q|\leq \dfrac{1}{\|T\|}$.

	\end{abstract}
	
	\maketitle
	
	\section{Introduction}
	
	\vspace{0.4cm}

	\noindent Throughout the paper we consider only bounded operators acting on complex Hilbert spaces. A contraction is an operator with norm not greater than $1$. The aim of this paper is to contribute to the study of dilation and lifting of $q$-commuting and $q$-intertwining operators.
	
	\begin{defn}
		For a complex number $q$, a pair of operators $(T_1, T_2)$ acting on a Hilbert sapce $\mathcal{H}$ is said to be $q$-\textit{commuting} if $T_1T_2 = q T_2T_1$. Also, for $T_1 \in \mathcal B(\HS_1)$ and $T_2 \in \mathcal B(\HS_2)$, the pair $(T_1,T_2)$ is said to be $q$-\textit{intertwining} by an operator $A\in \mathcal B(\HS_1,\HS_2)$ if $AT_1=qT_2A$.
	\end{defn}
		
	A nontrivial step in operator theory is the isometric (or unitary) dilation of a contraction due to Sz.-Nagy, \cite{Bela} which states the following: for any contraction $T$ acting on a Hilbert space $\HS$, there is a Hilbert space $\mathcal K \supseteq \HS$ and an isometry (or a unitary) $V$ on $\mathcal K$ such that $T^n=P_{\mathcal{H}}V^n|_{\mathcal{H}}$ for all $n \geq 0$ and that the dilation is minimal in the sense that
	\[
	\mathcal{K}=\bigvee\limits_{n=0}^{\infty}V^n\mathcal{H}=\ov{span} \, \{ V^nh\,:\, h \in \HS,\; n \geq 0 \}.
	\]
	Moreover, such a minimal dilation is unique upto isomorphism and $V^*$ is the minimal co-isometric extension of $T^*$, that is, $\HS$ is invariant under $V^*$ and $V^*|_{\HS}=T^*$. Thus, $(V, \mathcal K)$ is the minimal isometric lift of $(T, \HS)$. Ando, \cite{Ando} extended Sz.-Nagy's result to a pair of commuting contractions $T_1,T_2$ acting on $\HS$. Indeed, he showed that there are commuting isometries $V_1,V_2$ on $\mathcal K \supseteq \HS$ such that $T_1^{n_1}T_2^{n_2}=P_{\mathcal{H}}V_1^{n_1}V_2^{n_2}|_{\mathcal{H}}\;$ for all $n_1,n_2\geq 0$. However, no further generalization was possible as was proved by S. Parrott in \cite{Parrot} via a counter example showing that a triple of commuting contractions $(T_1,T_2,T_3)$ may not dilate to a commuting triple of isometries $(V_1,V_2,V_3)$.	Sarason, \cite{sarason} showed that if the minimal co-isometric extension $V$ of $T$ is such that $V^*$ is a unilateral shift of multiplicity one, then any commutant $X$ of $T$ has a norm preserving extension $ Y $ such that $YV=VY$. Sz.-Nagy and Foias generalized the seminal work of Sarason for an arbitrary contraction $T$ which is known as the classical commutant lifting theorem.
	\begin{thm} \label{Classic:Comm}
	Let $T$ be a contraction on a Hilbert space $\HS$ and let $(V, \mathcal K)$ be the minimal isometric (or minimal unitary) dilation of $T$. If $R$ commutes with $T$, then there exists an operator $S$ commuting with $V$ such that $\|R\|=\|S\|$ and $RT^n=P_{\HS}SV^n|_{\HS}$ for all $n \geq 0$.
	\end{thm}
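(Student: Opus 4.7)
The plan is to construct $S$ using Schaffer's realisation of the minimal isometric dilation, with the norm control coming from Parrott's one-step lifting lemma applied stagewise. By homogeneity we may assume $\|R\|\leq 1$, so it suffices to build a contraction $S$ on $\mathcal K$ satisfying $SV=VS$ and $P_\HS S|_\HS = R$. The power identity then follows from the coinvariance $V^*\HS\subseteq\HS$ — which forces $V^n\HS^\perp\subseteq\HS^\perp$ — via
\[
P_\HS S V^n|_\HS \;=\; P_\HS V^n S|_\HS \;=\; P_\HS V^n P_\HS S|_\HS \;=\; T^n R \;=\; R T^n.
\]

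In Schaffer's model $\mathcal K = \HS\oplus\bigoplus_{k\ge 0}\mathcal D_T$, where $\mathcal D_T = \overline{\mathrm{ran}}\,(I-T^*T)^{1/2}$ and
\[
V(h,d_0,d_1,\ldots) \;=\; (Th,\,D_Th,\,d_0,\,d_1,\ldots).
\]
I would write $S$ in block form with respect to this decomposition and use the relation $SV=VS$, together with the prescribed $(1,1)$-block $R$ and the commutation $RT=TR$, to express all remaining blocks of $S$ in terms of a single ``initial'' operator $X_0:\HS\to\mathcal D_T$ subject to an intertwining relation of the form $X_0 T = D_T R$ modulo the shift structure on the tail. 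The algebraic backbone guiding these computations is
\[
I - R^*T^*TR \;=\; (I-R^*R) + R^*(I-T^*T)R \;=\; D_R^2 + R^*D_T^2 R,
\]
which equates the defect of $TR$ with the sum of two nonnegative quadratic forms and controls the norm of every row and column appearing in the recursion.

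The main obstacle is the global contractivity of $S$: one can solve each individual block equation with a satisfactory norm bound, but the challenge is to make those choices cohere into a single operator of norm at most one. The cleanest way to arrange this is to phrase every stage of the recursion as a $2\times 2$ block-completion problem and invoke Parrott's lemma (equivalently, the one-step commutant-lifting lemma of Sz.-Nagy--Foias), which furnishes a simultaneous contractive completion at each level. Combining these completions with the minimality of $(V,\mathcal K)$ then assembles the $X_k$'s into an operator $S$ with $\|S\|\leq 1=\|R\|$, and the derivation of the power identity above finishes the argument.
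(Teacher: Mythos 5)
Your overall strategy for the \emph{isometric} half of the statement is sound and, in fact, coincides with the machinery this paper itself builds in Section 4: the paper quotes Theorem \ref{Classic:Comm} without proof (it is the classical Sz.-Nagy--Foias result), but its Theorem \ref{main} at $q=1$ is precisely a Schaffer-model, stagewise block-completion proof of the isometric case, in the style of Douglas--Muhly--Pearcy. Your reduction and the derivation of the power identity from $SV=VS$, $P_{\HS}S|_{\HS}=R$ and coinvariance $V^*\HS\subseteq\HS$ are correct for the minimal isometric dilation, and your identity $I-R^*T^*TR=D_R^2+R^*D_T^2R$ is exactly the base-case positivity that drives the paper's inequality $(T^*D_{T_2})(T^*D_{T_2})^*+S^*S\geq T_2^*S'^*S'T_2$ in Theorem \ref{qpart}. (Minor point: the correct first-stage relation in the Schaffer model is $X_0T+Y_0D_T=D_TR$, with $Y_0$ the first diagonal tail block, not $X_0T=D_TR$; your ``modulo the shift structure'' glosses this.)

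There are, however, two genuine gaps. First, the unitary case is not covered at all: for the \emph{minimal unitary} dilation $U$ the space $\HS$ is only semi-invariant, not coinvariant --- already for $T=0$ on $\mathbb{C}$ the minimal unitary dilation is the bilateral shift and $U\HS^\perp\not\subseteq\HS^\perp$ --- so both your power-identity computation and your unilateral block recursion break down. The standard repair, which the paper carries out in Theorem \ref{main112}, is to lift first to the minimal isometric dilation and then lift once more across the minimal unitary extension via the intertwining-lifting theorem (Theorem \ref{douglasintertwining} / Lemma \ref{comm.lift}), and your proposal would need this extra step. Second, the crux of your induction is asserted rather than proved: at stage $n$ the unknown blocks $A_n,B_n$ do not form a free corner to be completed, but must satisfy the linear constraint $A_nT+B_nS=S'W_{n-1}$ while $\begin{pmatrix} W_{n-1}&0\\ A_n&B_n\end{pmatrix}$ stays contractive. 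Parrott's lemma, taken literally, solves unconstrained corner-completion problems and does not ``furnish a simultaneous contractive completion'' satisfying such an operator equation; the right tools are the Douglas--Muhly--Pearcy solvability criterion (the paper's Theorem \ref{Dthm1}: $A_1Z_1+A_2Z_2=A_0$ with $Z_1^*Z_1+Z_2^*Z_2\leq I$ iff $A_1A_1^*+A_2A_2^*\geq A_0A_0^*$) combined with the lower-triangular contraction characterization $X=D_{B^*}CD_{T_1}$ (Proposition \ref{Dgenprop}). Moreover the positivity inequality must be re-verified at \emph{every} stage, with $W_{n-1}$ in place of $R$ and using the isometric-column identities $V_n^*V_n+S_n'^{*}S_n'=I$; your displayed identity is only the $n=1$ instance. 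Once these two points are supplied --- which is exactly the content of Theorem \ref{qpart} and Theorem \ref{main112} here --- your outline closes.
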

Sz.-Nagy and Foias, \cite{nagy1968dilatation, sz1971lifting} proved a variant of this theorem replacing the minimal isometric dilation by the minimal co-isometric extension of $T$, which further was established independently by Douglas, Muhly and Pearcy in \cite{Douglas}. After a few decades, Sebesty$\acute{e}$n generalized the classical commutant lifting theorem to the $q$-commuting setup for $q=\pm 1$ in the following way.

	\begin{thm}[Theorems 2 \& 3, \cite{Sebestyen}] \label{Sebestn}
		Let $q=1$ or $-1$, $T\in \mathcal{B}(\mathcal{H})$ be any contraction and $X$ be an operator on $\mathcal{{H}}$ satisfying $TX=qXT$. If $V$ acting on a Hilbert space $\mathcal{K} \supseteq \HS$ is the minimal unitary (or isometric) dilation of $T$, then there exists an operator $Y_q$ on $\mathcal{K}$ such that $Y_q$ dilates $X$, $\|Y_q\|=\|X\|$ and $VY_q=qY_qV$.
	\end{thm}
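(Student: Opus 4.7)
For $q=1$ the statement is just a rewording of Theorem~\ref{Classic:Comm}, so the real content is the case $q=-1$. My plan is to reduce that case to classical commutant lifting by means of a $2\times 2$ block-matrix ampliation.

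First I would form
\[
\tilde T := \begin{pmatrix} T & 0 \\ 0 & -T \end{pmatrix}, \qquad
\tilde X := \begin{pmatrix} 0 & X \\ X & 0 \end{pmatrix}
\]
on $\HS\oplus\HS$. A direct block computation shows that the entries of $\tilde T\tilde X - \tilde X\tilde T$ are $\pm(TX+XT)$, which vanish precisely because $TX=-XT$; hence $\tilde T\tilde X=\tilde X\tilde T$. Also $\tilde T$ is a contraction (since $\|-T\|=\|T\|\le 1$), and $\tilde X^{*}\tilde X = (X^{*}X)\oplus(X^{*}X)$, so $\|\tilde X\|=\|X\|$. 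If $(V,\mathcal K)$ is the minimal isometric dilation of $T$, then $(-V,\mathcal K)$ is the minimal isometric dilation of $-T$, because $(-V)^n=(-1)^n V^n$ dilates $(-T)^n$ and $\bigvee_n (-V)^n\HS = \bigvee_n V^n\HS = \mathcal K$. Therefore $\tilde V := V\oplus(-V)$ on $\mathcal K\oplus\mathcal K$ is the minimal isometric dilation of $\tilde T$; the minimal unitary case is handled identically with $V$ replaced by the minimal unitary dilation of $T$.

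Next I would apply Theorem~\ref{Classic:Comm} to the commuting pair $(\tilde T,\tilde X)$ to obtain $\tilde Y$ on $\mathcal K\oplus\mathcal K$ with $\tilde V\tilde Y=\tilde Y\tilde V$, $\|\tilde Y\|=\|\tilde X\|=\|X\|$, and $\tilde X\tilde T^n = P_{\HS\oplus\HS}\tilde Y\tilde V^n|_{\HS\oplus\HS}$ for every $n\ge 0$. Decompose $\tilde Y$ as a $2\times 2$ operator matrix with blocks $A,B,C,D$, and set $Y_{-1}:=B$. The commutation relation, read entry by entry, gives $VA=AV$, $VB=-BV$, $VC=-CV$, $VD=DV$; in particular $VY_{-1}=-Y_{-1}V$. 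Comparing the $(1,2)$-blocks of the lifting identity gives $(-1)^n P_\HS BV^n|_\HS = (-1)^n XT^n$, that is, $XT^n = P_\HS Y_{-1}V^n|_\HS$ for all $n\ge 0$, so $Y_{-1}$ dilates $X$ in the commutant-lifting sense. Finally $\|X\|=\|P_\HS B|_\HS\|\le\|B\|\le\|\tilde Y\|=\|X\|$ forces $\|Y_{-1}\|=\|X\|$.

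The main obstacle is bookkeeping rather than geometry: one has to confirm simultaneously that the block identities yield the anticommutation on $B$, that the $(1,2)$-block of the lifting relation gives $XT^n=P_\HS Y_{-1}V^n|_\HS$, and that the ampliation of the minimal dilation is still minimal. The advantage of this reduction is that it buys the $q=-1$ case for free from Theorem~\ref{Classic:Comm} without revisiting the underlying geometric construction of the dilation.
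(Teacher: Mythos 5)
Your proof is correct, but it takes a genuinely different route from the paper. The paper does not actually prove Theorem \ref{Sebestn} — it quotes it from \cite{Sebestyen} — and the proof it emulates elsewhere (Theorem \ref{Dmain} explicitly follows Sebesty\'en's Theorem 2) is an inductive construction: one builds operators $Y_n$ on the nested subspaces $K_n=\bigvee_{\ell=0}^{n}(qV_q)^{*\ell}\mathcal{H}$ via the dual Parrott theorem (Theorem \ref{dualparrot}), verifying at each stage a sesquilinear compatibility identity, and then passes to a strong limit. Your reduction via the ampliation $\tilde T=T\oplus(-T)$, $\tilde X=\bigl(\begin{smallmatrix}0&X\\X&0\end{smallmatrix}\bigr)$ bypasses all of that, and every step checks out: $\tilde T\tilde X=\tilde X\tilde T$ encodes exactly $TX=-XT$; $-V$ is indeed the minimal isometric (resp.\ unitary) dilation of $-T$ on the same $\mathcal{K}$, and $V\oplus(-V)$ is minimal for $\tilde T$ since $\tilde V^n$ preserves the two summands; the block form of $\tilde V\tilde Y=\tilde Y\tilde V$ forces $VB=-BV$; the $(1,2)$ entry of $\tilde X\tilde T^n=P_{\HS\oplus\HS}\tilde Y\tilde V^n|_{\HS\oplus\HS}$ yields $XT^n=P_{\HS}BV^n|_{\HS}$ after cancelling $(-1)^n$; and the norm squeeze $\|X\|\le\|B\|\le\|\tilde Y\|=\|X\|$ is right. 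Note also that whichever precise form of lifting Theorem \ref{Classic:Comm} delivers (e.g.\ the stronger $P_{\HS\oplus\HS}\tilde Y=\tilde X P_{\HS\oplus\HS}$) transfers to $B$ entrywise, so your argument is robust to the exact reading of ``$Y_q$ dilates $X$''. As for what each approach buys: yours gets $q=\pm1$ for free from classical commutant lifting, and would extend to $q$ a primitive $k$-th root of unity via a $k\times k$ cyclic ampliation; but no finite ampliation covers arbitrary $|q|=1$, let alone the paper's regime $0<|q|\le 1/\|T_1\|$, and your trick crucially uses that $-V$ is again a minimal dilation \emph{on the same space} — for general $q$ the minimal dilation of $qT$ lives on a different space, which is exactly why Theorems \ref{Dmain} and \ref{main} must pass to the two-space intertwining formulation $VY=qYV_q$ and why the inductive Parrott-type construction is the one that scales.
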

	
	Recently Keshari and Mallick, \cite{K.M.} improved Sebesty$\acute{e}$n's result for $q$-commuting and $q$-intertwining operators with $|q|=1$. Also, Mallick and Sumesh have established a more general operator theoretic version of the same results in \cite{sumesh}. \\
	
	The main results of this paper, Theorems \ref{Dmain}, \ref{qcoisoext}, \ref{main}, \ref{main112} show that these existing results can be further generalized for any complex number $q$ satisfying $|q| \leq 1/{\|T\|}$. Needless to mention that it includes all scalars $q$ from the closed unit disk $\ov{\D}$. We also provide independent proofs to some existing results in this direction. 
 	
\vspace{0.3cm}

	\section{Examples and preparatory results}
	
	\vspace{0.4cm}
	
\noindent We begin with a pair of examples of $q$-commuting of operators. In a similar fashion one can construct examples of $q$-intertwining pairs of operators.
 
\begin{eg}
	Let $q$ be any non zero complex number. Now let $T_1=\begin{pmatrix}
		1&0\\
		1/2&q
	\end{pmatrix}$ and $T_2=\begin{pmatrix}
	0&0\\
	1/4&0
\end{pmatrix}$ in $M_2(\mathbb{C})$. Then $$T_1T_2=\begin{pmatrix}
0&0\\
q/4&0
\end{pmatrix}$$ and $$T_2T_1=\begin{pmatrix}
0&0\\
1/4&0
\end{pmatrix} .$$ Hence clearly $T_1T_2=qT_2T_1$. 
In fact for any complex numbers $a,b$ and $d$ taking $T_1=\begin{pmatrix}
	a&0\\
	b&qa
\end{pmatrix}$ and $T_2=\begin{pmatrix}
	0&0\\
	d&0
\end{pmatrix}$ in $M_2(\mathbb{C})$ gives an example of a $q$- commuting pair of operators $(T_1,T_2)$.
\end{eg}
	  The following example was given in \cite{K.M.}.
	\begin{eg}[\cite{K.M.}]
		Let $H^2(\mathcal{E})=H^2(\mathbb{D})\otimes \mathcal{E}$ be the $\mathcal{E}$-valued Hardy Hilbert space (similarly,  $L^2_a(\mathbb{D},\mathcal{E})=L^2_a\otimes\mathcal{E}$ denote the $\mathcal{E}$-valued Bergman space). Let $q$ be any complex number of modulus 1. Let $T_2=M_z\otimes I_{\mathcal{E}}$ and $T_1=C_q\otimes \mathcal{E}$ on $H^2(\mathcal{E})$ (similarly on $L^2_a(\mathbb{D})$) be such that $M_z$ is a multiplication operator on Hardy space  $H^2(\mathbb{D})$ and $C_q$ on $H^2(\mathbb{D})$ is defined as \[C_q(f)(z)=f(qz) \text{ for } f\in H^2(\mathbb{D})\; (\text{similarly } f\in L^2_a(\mathbb{D})),\; z\in \mathbb{D}.\] Then it can be easily verified that $T_1T_2=qT_2T_1$.
	\end{eg}
	To see more examples of $q$-commuting operators, an interested reader can refer \cite{K.M.}.  
	The following dual version of Parrot's theorem (\cite{parrot1}, Theorem 1) on quotient norms  plays an important role in this paper.
	
	\begin{thm}[{\cite{Sebestyen}, Theorem 1}]\label{dualparrot}
		Let ${K}$ and ${K}'$ be Hilbert spaces, $H \subseteq K$ and $H' \subseteq K'$ be subspaces, and $X : H \rightarrow K'$ and $X': H' \rightarrow K$ be given bounded linear transformations. Then there exists operator $Y:K \rightarrow K'$ extending $X$ so that $Y^*$ extends $X'$ if and only if the following identity holds true:
		\[
		\langle Xh,h' \rangle = \langle h, X'h' \rangle \quad \text{for all }h \in H \text{ and } h' \in H'.
		\]
		Moreover $Y$ can be of norm max$\{\|X\|, \|X'\|\}$ possible at most.
	\end{thm}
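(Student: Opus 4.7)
\emph{Necessity} is immediate: if $Y$ extends $X$ and $Y^{*}$ extends $X'$, then for any $h\in H$ and $h'\in H'$,
\[
\langle Xh, h'\rangle = \langle Yh, h'\rangle = \langle h, Y^{*}h'\rangle = \langle h, X'h'\rangle.
\]
The real work is the converse, which I plan to carry out block-by-block. Decompose $K = H \oplus H^{\perp}$ and $K' = H' \oplus (H')^{\perp}$ orthogonally, and seek $Y : K \to K'$ as a $2\times 2$ block matrix
\[
Y = \begin{pmatrix} A & B \\ C & D \end{pmatrix},
\]
with $A : H \to H'$, $B : H^{\perp} \to H'$, $C : H \to (H')^{\perp}$ and $D : H^{\perp} \to (H')^{\perp}$. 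The requirement $Y|_{H} = X$ forces $A = P_{H'}X$ and $C = P_{(H')^{\perp}}X$, while $Y^{*}|_{H'} = X'$ forces $A^{*} = P_{H}X'|_{H'}$ and $B^{*} = P_{H^{\perp}}X'|_{H'}$. The two prescriptions for $A$ are mutually consistent precisely when $\langle Xh,h'\rangle = \langle h,X'h'\rangle$ for all $h\in H$, $h'\in H'$, i.e.\ exactly under the hypothesised inner-product identity.

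With $A$, $B$ and $C$ thus pinned down, the corner $D$ remains free, and I would appeal to Parrott's classical theorem on quotient norms, which yields a choice of $D$ for which
\[
\|Y\| \;=\; \max\left\{\, \left\|\begin{pmatrix} A & B \end{pmatrix}\right\|,\; \left\|\begin{pmatrix} A \\ C \end{pmatrix}\right\| \,\right\}.
\]
By construction the column $\begin{pmatrix} A \\ C \end{pmatrix}$ is the operator $X : H \to K'$, while the adjoint of the row $\begin{pmatrix} A & B \end{pmatrix}$ is $X' : H' \to K$; so these two submatrix norms equal $\|X\|$ and $\|X'\|$ respectively, giving the asserted bound $\|Y\| \leq \max\{\|X\|, \|X'\|\}$.

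The principal obstacle is precisely this last norm control. The naive default $D = 0$ already produces a valid $Y$ satisfying both intertwining conditions, but there is no elementary estimate bounding its norm by $\max\{\|X\|, \|X'\|\}$. Parrott's theorem on quotient norms is what upgrades the algebraic construction to one with the sharp norm; the inner-product identity itself is used only to secure the compatibility of the two forced prescriptions of the $(1,1)$-block $A$.
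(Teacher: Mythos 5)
Your proof is correct. The paper does not prove this statement at all: it quotes it as Theorem~1 of Sebesty\'en \cite{Sebestyen}, introducing it precisely as ``the dual version of Parrott's theorem (\cite{parrot1}, Theorem 1) on quotient norms,'' and your argument is exactly the proof that description suggests --- the sesquilinear identity is what reconciles the two forced prescriptions $A=P_{H'}X$ and $A^{*}=P_{H}X'|_{H'}$ of the $(1,1)$-corner, the column $\begin{pmatrix}A\\ C\end{pmatrix}$ is $X$, the adjoint of the row $\begin{pmatrix}A & B\end{pmatrix}$ is $X'$, and Parrott's completion of the free corner $D$ gives $\|Y\|\leq\max\{\|X\|,\|X'\|\}$, while $D=0$ already settles existence. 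The one small point to flag is that the decompositions $K=H\oplus H^{\perp}$ and $K'=H'\oplus(H')^{\perp}$ need $H$ and $H'$ closed; for general subspaces first extend $X$ and $X'$ by continuity to the closures (the identity and the norms pass to the limit), which costs nothing.
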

	
	The following result from \cite{Douglas1} will be useful.
	
	\begin{lem} [\cite{Douglas1}, Theorem 1] \label{Dlemma}
		Suppose that $\mathcal{S}$, $\mathcal{H}$, $\mathcal{K}$ are Hilbert spaces and $A\in \mathcal{B}(\mathcal{S}, \mathcal{K})$ and $B\in \mathcal{B}(\mathcal{H}, \mathcal{K})$. Then there exists a contraction $Z\in \mathcal{B}(\mathcal{S},\mathcal{H})$ satisfying $A=BZ$ if and only if $AA^*\leq BB^*$.
	\end{lem}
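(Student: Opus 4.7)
The plan is to prove both implications separately. The forward direction follows from a one-line sandwiching argument, while the reverse direction is carried out by constructing $Z$ as the adjoint of a densely defined contraction built from the range of $B^*$.

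For the forward direction, I would suppose $A=BZ$ with $\|Z\|\leq 1$. Then $AA^* = BZZ^*B^*$, and since $ZZ^*\leq I_{\mathcal{H}}$, sandwiching by $B$ on the left and $B^*$ on the right gives $BZZ^*B^* \leq BB^*$, so $AA^*\leq BB^*$.

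For the converse, the central idea is to define a linear map $T:\mathrm{Ran}(B^*)\to\mathcal{S}$ by the rule
\[
T(B^*k) \;:=\; A^*k, \qquad k\in\mathcal{K},
\]
and eventually set $Z:=T^*$ once $T$ has been extended boundedly to all of $\mathcal{H}$. The hypothesis $AA^*\leq BB^*$ yields, for every $k\in\mathcal{K}$,
\[
\|A^*k\|^2 \;=\; \langle AA^*k,k\rangle \;\leq\; \langle BB^*k,k\rangle \;=\; \|B^*k\|^2.
\]
This single inequality simultaneously gives (i) well-definedness of $T$, because $B^*k_1=B^*k_2$ forces $A^*k_1=A^*k_2$, and (ii) the contractive bound $\|T(B^*k)\|\leq\|B^*k\|$ on the dense subspace $\mathrm{Ran}(B^*)\subseteq\overline{\mathrm{Ran}(B^*)}$.

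The remaining steps are routine. I would extend $T$ by continuity to the closure $\overline{\mathrm{Ran}(B^*)}$, and then to all of $\mathcal{H}$ by declaring it to be zero on the orthogonal complement $\overline{\mathrm{Ran}(B^*)}^{\perp}=\ker B$; the resulting operator $T:\mathcal{H}\to\mathcal{S}$ is a contraction. Setting $Z:=T^*$, one checks $Z^*B^*k = T(B^*k) = A^*k$ for every $k\in\mathcal{K}$, hence $Z^*B^* = A^*$, and therefore $A=BZ$. The only genuinely conceptual step is the passage from the operator inequality $AA^*\leq BB^*$ to the pointwise norm bound $\|A^*k\|\leq\|B^*k\|$ via the polarization of the quadratic form; this is where the hypothesis is used in full strength, and everything else (well-definedness, continuous extension, and taking the adjoint) follows mechanically. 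Consequently I do not anticipate a substantive obstacle, as the hypothesis is tailored precisely to make this construction go through.
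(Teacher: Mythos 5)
Your proof is correct and complete, but there is nothing in the paper to compare it against: the paper states this lemma without proof, importing it verbatim from Douglas's 1966 paper (\cite{Douglas1}, Theorem 1). Your argument --- define $T(B^*k):=A^*k$ on $\mathrm{Ran}(B^*)$, use the single inequality $\langle AA^*k,k\rangle\le\langle BB^*k,k\rangle$ to get well-definedness and contractivity at once, extend by continuity to $\overline{\mathrm{Ran}(B^*)}$ and by zero on $\ker B=\overline{\mathrm{Ran}(B^*)}^{\perp}$, then set $Z=T^*$ --- is precisely the classical proof of Douglas's majorization--factorization theorem, specialized to the contraction case. One cosmetic remark: no polarization is involved anywhere; the pointwise bound $\|A^*k\|\le\|B^*k\|$ is immediate because $\|A^*k\|^2$ and $\|B^*k\|^2$ are exactly the quadratic forms of $AA^*$ and $BB^*$ at $k$, which is what the operator inequality asserts.
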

	Now we recall a generalization of the above lemma.	
	
	\begin{thm}[{\cite{Douglas}, Theorem 1}]\label{Dthm1}
		Let $\mathcal{H}_0, \mathcal{H}_1,\mathcal{H}_2$ and $\mathcal{K}$ be Hilbert spaces, and for $0\leq i \leq 2$ let $A_i$ be an operator mapping $\mathcal{H}_i$ into $\mathcal{K}$. Then there exist operators $Z_1$ and $Z_2$ that map $\mathcal{H}_0$ into $\mathcal{H}_1$ and $\mathcal{H}_2$, respectively, and that satisfy the two conditions 
		\begin{enumerate}
			\item  $A_1Z_1+A_2Z_2=A_0,$
			\item  $Z_1^*Z_1+Z_2^*Z_2\leq I_{\mathcal{H}_0}$
		\end{enumerate} 
		if and only if 
		$ A_1A_1^*+A_2A_2^*\geq A_0A_0^*. $
	\end{thm}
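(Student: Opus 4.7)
The plan is to reduce this to the Douglas lemma (Lemma \ref{Dlemma}) already recorded in the excerpt by packaging $A_1$ and $A_2$ into a single ``row'' operator on a direct sum. Specifically, I would set $\mathcal{H} := \mathcal{H}_1 \oplus \mathcal{H}_2$ and define $B \in \mathcal{B}(\mathcal{H}, \mathcal{K})$ by $B(h_1 \oplus h_2) = A_1 h_1 + A_2 h_2$, i.e., in block form $B = [A_1 \ \ A_2]$. A direct computation gives $B^*k = A_1^*k \oplus A_2^*k$, and consequently
\[
BB^* = A_1 A_1^* + A_2 A_2^*.
\]

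For the forward direction, assume $Z_1, Z_2$ exist satisfying (1) and (2). Define $Z : \mathcal{H}_0 \to \mathcal{H}$ by $Zh_0 = Z_1 h_0 \oplus Z_2 h_0$. Then $Z^*Z = Z_1^*Z_1 + Z_2^*Z_2 \leq I_{\mathcal{H}_0}$, so $Z$ is a contraction, and $BZ = A_1 Z_1 + A_2 Z_2 = A_0$. Hence $A_0 A_0^* = BZZ^*B^* \leq BB^* = A_1 A_1^* + A_2 A_2^*$.

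For the converse, assume $A_0 A_0^* \leq A_1 A_1^* + A_2 A_2^* = BB^*$. By Lemma \ref{Dlemma} applied to $A_0$ and $B$, there is a contraction $Z \in \mathcal{B}(\mathcal{H}_0, \mathcal{H})$ with $A_0 = BZ$. Decomposing $Z$ according to the orthogonal sum $\mathcal{H} = \mathcal{H}_1 \oplus \mathcal{H}_2$ as $Zh_0 = Z_1 h_0 \oplus Z_2 h_0$ with $Z_i \in \mathcal{B}(\mathcal{H}_0, \mathcal{H}_i)$, we obtain $A_1 Z_1 + A_2 Z_2 = BZ = A_0$, which is (1). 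Finally, $\|Z\| \leq 1$ yields $Z^*Z \leq I_{\mathcal{H}_0}$, and since $Z^*Z = Z_1^*Z_1 + Z_2^*Z_2$, condition (2) follows.

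I do not anticipate a serious obstacle: the only substantive ingredient is Douglas's factorisation lemma, already recorded as Lemma \ref{Dlemma}, and the entire argument is the ``row extension'' trick of replacing $(A_1, A_2)$ by $B = [A_1 \ A_2]$ on $\mathcal{H}_1 \oplus \mathcal{H}_2$. The one point that needs a clean verification is the identity $Z^*Z = Z_1^*Z_1 + Z_2^*Z_2$ for $Z$ written as a column operator, but this is immediate from the definition of the adjoint on a direct sum.
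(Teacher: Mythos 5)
Your proposal is correct in every step: the row operator $B=[A_1\ A_2]$ on $\mathcal{H}_1\oplus\mathcal{H}_2$ satisfies $BB^*=A_1A_1^*+A_2A_2^*$, the column operator $Z$ satisfies $Z^*Z=Z_1^*Z_1+Z_2^*Z_2$, and both directions go through exactly as you describe. Note that the paper itself offers no proof of this statement --- it is quoted verbatim as Theorem 1 of the cited Douglas--Muhly--Pearcy paper --- but your reduction to Lemma \ref{Dlemma} via the row/column trick is precisely the standard argument (essentially the original one), so there is nothing to repair and no genuinely different route to compare.
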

We will make numerous applications of the following famous theorem due to Sz.-Nagy and Foias.
 	
	\begin{thm}[Sz.-Nagy \& Foias, \cite{nagy1968dilatation}]\label{douglasintertwining}
		Suppose that for $i=1,2$, $T_i$ is a contraction acting on a Hilbert space $\mathcal{H}_i$ and $V_i$ is the unique minimal co-isometric extension of $T_i$ acting on the Hilbert space $\mathcal{K}_i\supseteq \mathcal{H}_i$. Let $X$ be an operator that maps $\mathcal{H}_1$ into $\mathcal{H}_2$ and satisfies the equation $XT_1=T_2X$. Then there exists an operator $Y$ mapping $\mathcal{K}_1$ into $\mathcal{K}_2$ such that 
		\[
		YV_1 = V_2Y, \quad Y\mathcal{H}_1\subseteq \mathcal{H}_2, \quad Y|_{\mathcal{H}_1}=X, \quad \|Y\|=\|X\|.	\] 
	\end{thm}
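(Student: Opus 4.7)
The plan is to reduce the intertwining version to the commutant case by a standard direct-sum trick, and then to establish commutant lifting for the minimal co-isometric extension through an iterative Parrott-type extension along a natural $V$-invariant filtration of $\mathcal{K}$. For the direct-sum reduction, put $T := T_1 \oplus T_2$ on $\mathcal{H} := \mathcal{H}_1 \oplus \mathcal{H}_2$ and $V := V_1 \oplus V_2$ on $\mathcal{K} := \mathcal{K}_1 \oplus \mathcal{K}_2$, and define $R \in \mathcal{B}(\mathcal{H})$ by $R(h_1 \oplus h_2) := 0 \oplus X h_1$. An immediate check shows that $X T_1 = T_2 X$ is equivalent to $RT = TR$, that $\|R\| = \|X\|$, and that $V$ is the minimal co-isometric extension of $T$. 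If one produces $S \in \mathcal{B}(\mathcal{K})$ with $SV = VS$, $S|_\mathcal{H} = R$ and $\|S\| = \|R\|$, then the $(2,1)$-block $Y := P_{\mathcal{K}_2} S|_{\mathcal{K}_1}$ automatically satisfies $Y V_1 = V_2 Y$, $Y|_{\mathcal{H}_1} = X$ (in particular $Y \mathcal{H}_1 \subseteq \mathcal{H}_2$), and $\|X\| \leq \|Y\| \leq \|S\| = \|X\|$, as required.

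For the commutant case, let $R \in \mathcal{B}(\mathcal{H})$ commute with a contraction $T \in \mathcal{B}(\mathcal{H})$ and let $V$ be the minimal co-isometric extension of $T$ on $\mathcal{K}$. By minimality, $\mathcal{K}$ is the closed span of $\{V^{*n} h : h \in \mathcal{H},\ n \geq 0\}$. Setting $\mathcal{E}_n := \mathrm{span}\{V^{*k} h : h \in \mathcal{H},\ 0 \leq k \leq n\}$, one verifies using $VV^* = I$ together with $V \mathcal{H} \subseteq \mathcal{H}$ that each $\mathcal{E}_n$ is $V$-invariant. I would construct operators $S_n : \mathcal{E}_n \to \mathcal{K}$ recursively, with $S_0 = R$, $\|S_n\| \leq \|R\|$, $S_n|_\mathcal{H} = R$, and $S_n V = V S_{n-1}$ on $\mathcal{E}_{n-1}$. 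At the recursive step, $S_{n+1}$ is prescribed on the new generators by $S_{n+1}(V^{*(n+1)} h) := V^* S_n(V^{*n} h)$, and Theorem~\ref{dualparrot} produces an extension of the desired norm once its pairing identity has been verified. The compatible family $\{S_n\}$ assembles on the dense subspace $\bigcup_n \mathcal{E}_n$ into a norm-bounded operator which extends by continuity to the required $S \in \mathcal{B}(\mathcal{K})$, with $SV = VS$, $S|_\mathcal{H} = R$ and $\|S\| = \|R\|$.

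The chief technical point is verifying, at each inductive stage, that the prescribed values on the new generators $V^{*(n+1)} \mathcal{H}$ are consistent with $S_n$ wherever the two prescriptions overlap, and that the norm bound persists. This is where the commutation $RT = TR$ and the co-isometric relation $VV^* = I$ (which governs how the generators telescope back into $\mathcal{E}_n$ under $V$ via $V \cdot V^{*k} = V^{*(k-1)}$) must be combined in order to satisfy the hypotheses of Theorem~\ref{dualparrot}. Once this compatibility is executed at each stage, the recursive extension terminates in a valid $S$, and the conclusion of the theorem follows.
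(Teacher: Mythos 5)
Two remarks before the technical one: the paper never proves Theorem \ref{douglasintertwining} at all --- it is imported from Sz.-Nagy and Foias \cite{nagy1968dilatation}, with only the remark that Douglas, Muhly and Pearcy rederived it from commutant lifting --- so there is no in-paper proof to compare yours against line by line. What you have proposed is, in effect, the paper's own toolkit specialized to $q=1$: your direct-sum reduction (with $T_1\oplus T_2$ and the corner operator $h_1\oplus h_2\mapsto 0\oplus Xh_1$) is exactly the device used in the proof of Theorem \ref{coisoextn} (the matrices $\widetilde{T}$ and $\widetilde{A}$ there), and it is executed correctly: $V_1\oplus V_2$ is indeed the minimal co-isometric extension of $T_1\oplus T_2$, the $(2,1)$ block of any commuting lift $S$ automatically satisfies $YV_1=V_2Y$, and the norm equality follows as you say. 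Likewise, your iterative scheme along $\mathcal{E}_n=\operatorname{span}\{V^{*k}h: h\in\mathcal{H},\ 0\le k\le n\}$ is precisely the Sebesty\'{e}n-style induction by which the paper proves its generalization, Theorem \ref{Dmain}; the statement under review is recovered from the $q=1$ case of that theorem together with your reduction.

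The one step that does not work as written is the recursion. You prescribe the forward operator on the new generators, $S_{n+1}(V^{*(n+1)}h):=V^*S_n(V^{*n}h)$, and then ask Theorem \ref{dualparrot} to extend it; but $\mathcal{E}_{n+1}=\mathcal{E}_n+V^{*(n+1)}\mathcal{H}$ is in general not a direct sum, so this prescription has exactly the consistency problem you flag as the ``chief technical point,'' and Theorem \ref{dualparrot} is not built to extend a map given on an overlapping spanning set --- it extends a \emph{pair}: a forward map on a subspace of the domain and a backward map on a subspace of the codomain. The repair, which is how the proof of Theorem \ref{Dmain} actually runs, is to feed Theorem \ref{dualparrot} the pair consisting of $S_n$ on $H=\mathcal{E}_n$ and $X':=V^*S_n^*V|_{V^*\mathcal{E}_n}$ on $H'=V^*\mathcal{E}_n$; this $X'$ is well defined because $VV^*=I$ makes $V^*$ an isometry, and $\|X'\|\le\|S_n\|=\|R\|$ makes the norm bound automatic. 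The pairing identity $\langle S_nx, V^*k\rangle=\langle x, X'(V^*k)\rangle$ for $x,k\in\mathcal{E}_n$ reduces to the vanishing of $\langle (VS_n-S_nV)x,k\rangle$, i.e.\ to a \emph{projected} commutation relation on $\mathcal{E}_n$, and this cannot be verified from $S_n|_{\mathcal{E}_{n-1}}=S_{n-1}$ and the norm bound alone: you must carry, as explicit induction hypotheses, the adjoint condition $S_n^*|_{V^*\mathcal{E}_{n-1}}=V^*S_{n-1}^*V|_{V^*\mathcal{E}_{n-1}}$ and the projected relation $S_n^*P_nV^*=V^*S_{n-1}^*P_{n-1}$ (where $P_n$ projects onto $\overline{\mathcal{E}_n}$) --- the analogues of conditions (b) and (c) in the proof of Theorem \ref{Dmain} --- whose base case is the only place $RT=TR$ enters. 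With those hypotheses added, the verification goes through exactly as in the paper's Claim, the strong limit argument you sketch is fine, and your proof is complete; without them, the ``pairing identity has been verified'' step is a genuine gap rather than a routine check.
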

	In \cite{Douglas}, Douglas, Muhly and Pearcy gave an alternate proof to the above theorem by an application of the commutant lifting theorem.
	
	Let $S_0, T_0$ be contractions acting on Hilbert spaces $\mathcal{H}_0$ and $\mathcal{K}_0$ respectively. Let $U$ and $V$ be the unique minimal co-isometric extensions of $S_0$ and $T_0$ acting on the Hilbert spaces $\mathcal{H}$ and $\mathcal{K}$ respectively. Let $P_n$ and $Q_n$ be the orthogonal projections of the spaces $\mathcal{H}$ and $\mathcal{K}$ to the corresponding subspaces 
	$ \bigvee\limits_{k=0}^n U^{*k}(\mathcal{H}_0)$ and $\bigvee\limits_{k=0}^n V^{*k}(\mathcal{K}_0)$ respectively, for $n = 0, 1, 2,\dots$ . The following theorem is a generalization of the commutant lifting theorem due to Sz. Nagy and Foias.
	  
		\begin{thm}[\cite{Sebestyen1}, Theorem 3]\label{sebintlift}
		Let $S_0$ on Hilbert space $\mathcal{H}_0$ and $T_0$ on Hilbert space $\mathcal{K}_0$ be any cotractions $R : \mathcal{H} \to \mathcal{H}$ be any contraction operator that commutes with all of the projections $\{P_n\}_{n=0}$, let $X_0 : \mathcal{H}\to\mathcal{K}$ be an operator that satisfies 
		\[T_0X_0=X_0S_0RP_0.\] 
		Then there exists an operator $X:\mathcal{H}\to \mathcal{K}$ which extends $X_0$ has the same norm as $X_0$ and intertwines $V$ and $UR$: 
\[ VX=XUR \]	
	\end{thm}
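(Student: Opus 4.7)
The idea is to build $X$ inductively along the increasing filtration $\mathcal{H}_n := \bigvee_{k=0}^{n} U^{*k}(\mathcal{H}_0)$, whose union is dense in $\mathcal{H}$ by minimality of the co-isometric extension $U$, together with its counterpart $\mathcal{K}_n := \bigvee_{k=0}^{n} V^{*k}(\mathcal{K}_0)$ on the target side. At each stage one seeks an operator $X_n \colon \mathcal{H}_n \to \mathcal{K}_n$ with $\|X_n\| \leq \|X_0\|$ that extends $X_{n-1}$ and satisfies the level-$n$ form of the intertwining $V X_n = X_n U R$. The base case $X_0|_{\mathcal{H}_0}$ is essentially handed to us by the hypothesis: since $R P_0 = P_0 R$, the restriction $R|_{\mathcal{H}_0}$ is a contraction on $\mathcal{H}_0$, and $T_0 X_0 = X_0 S_0 R P_0$ reads as the required intertwining on $\mathcal{H}_0$.

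For the inductive step, the target relation $V X_{n+1} = X_{n+1} U R$ together with the values already determined on $\mathcal{H}_n$ forces how $X_{n+1}$ must behave on the increment $\mathcal{H}_{n+1} \ominus \mathcal{H}_n$. I would set this up as a two-sided factorization problem and apply Theorem \ref{Dthm1}, or equivalently the dual Parrott theorem \ref{dualparrot}, so that the existence of a norm-preserving extension reduces to an operator inequality of the shape $A_1 A_1^* + A_2 A_2^* \geq A_0 A_0^*$. In our situation, this inequality should be driven by $R$ being a contraction together with $R$ commuting with the projection $P_n$.

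The main obstacle is precisely verifying that operator inequality at each inductive step. Note that $UR$ is generally not itself a co-isometric extension of $S_0 R|_{\mathcal{H}_0}$ (unless $R$ is a co-isometry), so one cannot simply invoke the Sz.-Nagy--Foias intertwining lifting result (Theorem \ref{douglasintertwining}) as a black box; the commutation of $R$ with every $P_n$ is exactly what keeps the pieces of the induction compatible and prevents the norm from inflating as one passes from $\mathcal{H}_n$ to $\mathcal{H}_{n+1}$. Once a compatible tower $\{X_n\}$ has been built, the strong limit $Xh := \lim_n X_n h$ on the dense union $\bigcup_n \mathcal{H}_n$ extends uniquely to the required operator $X \colon \mathcal{H} \to \mathcal{K}$ with $\|X\| = \|X_0\|$ and $V X = X U R$.
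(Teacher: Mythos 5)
You should note at the outset that the paper does not actually prove this statement: it is quoted from [\cite{Sebestyen1}, Theorem 3] and used as a black box (in Theorem \ref{qcommlift}). The closest in-paper benchmark is therefore the proof of Theorem \ref{Dmain}, which is precisely this induction carried out in the special case $R=qI$. Your scaffolding agrees with that benchmark: the filtrations $\mathcal{H}_n=\bigvee_{k=0}^{n}U^{*k}\mathcal{H}_0$ and $\mathcal{K}_n=\bigvee_{k=0}^{n}V^{*k}\mathcal{K}_0$, a norm-controlled extension at each stage, and a strong limit on the dense union. Your two structural observations are also correct: $UR$ is in general not a co-isometric extension, so Theorem \ref{douglasintertwining} is unavailable as a black box, and the norm stays at $\|X_0\|$ because $U,V$ are co-isometries and $R$ is a contraction; and, despite the typos in the quoted statement, your reading of the base case via $RP_0=P_0R$ is the right one.

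However, there is a genuine gap: the inductive step, which is where the theorem lives, is left unexecuted and is misdescribed. First, Theorem \ref{Dthm1} and Theorem \ref{dualparrot} are not interchangeable here, and what Theorem \ref{dualparrot} asks you to verify is not a positivity inequality of the shape $A_1A_1^*+A_2A_2^*\geq A_0A_0^*$ but the sesquilinear compatibility $\langle Xh,h'\rangle=\langle h,X'h'\rangle$ between two partial prescriptions. Second, the intertwining does not ``force how $X_{n+1}$ must behave on the increment $\mathcal{H}_{n+1}\ominus\mathcal{H}_n$''; what it forces is the adjoint: one must prescribe $X_{n+1}|_{\mathcal{H}_n}=X_n$ together with $X_{n+1}^*|_{V^*\mathcal{K}_n}=(UR)^*X_n^*V|_{V^*\mathcal{K}_n}$ and then check that these two prescriptions are compatible in the above sesquilinear sense. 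That check is the entire content of the proof, and it requires carrying an auxiliary relation through the induction --- the analogue of condition $(c)$ in the proof of Theorem \ref{Dmain}, here $X_n^*Q_nV^*=(UR)^*X_{n-1}^*Q_{n-1}$ with $Q_n$ the projections onto $\mathcal{K}_n$. It is exactly at this point that the hypothesis $RP_n=P_nR$ does its work, and you never say how: taking adjoints gives $P_nR^*=R^*P_n$, whence $P_{n+1}(UR)^*=P_{n+1}R^*U^*=R^*P_{n+1}U^*=R^*U^*P_n=(UR)^*P_n$, the analogue of \eqref{eq1'}; with this shift relation in hand, the computation in the ``Claim'' of the proof of Theorem \ref{Dmain} goes through verbatim with $qV_q$ replaced by $UR$. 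Since you explicitly label this verification ``the main obstacle'' and then stop, what you have is a correct plan pointing at the right tools, not a proof: without the compatibility identity, the auxiliary relation $(c)$, and the shift relation above, the induction does not close.
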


\vspace{0.1cm}	
	
\section{The $q$-commuting and $q$-intertwining co-isometric extensions}

\vspace{0.4cm}
	
\noindent In this Section, we assume the existence of a co-isometric extension of a non-zero contraction $T$ and find an extension of its $q$-commutant when $0< q \leq \dfrac{1}{\|T\|}$. For a contraction $T$ acting on a Hilbert space $\HS$, if $(V,\mathcal{K})$ is the minimal co-isometric extension of $T$ then $\mathcal{K}=\bigvee_{n=0}^{\infty}(V)^{*n}\mathcal{H}$. Suppose $K_n = \bigvee\limits_{m=0}^n V^{*m}\mathcal{H}$. We begin with the simpler case when $q$ belongs to the deleted closed disk $\overline{\mathbb D} \setminus \{0 \}$. 

\begin{thm}\label{qcommlift}
	Let $T_1$ be a contraction and $T_2$ be an operator on a Hilbert space $\mathcal{H}$. Suppose $T_1T_2=qT_2T_1$, where $0<|q|\leq 1$. Let $V$ on $\mathcal{K}$ be the minimal co-isometric extension of $T_1$. Then there exists a bounded linear operator $S:\mathcal{K} \rightarrow \mathcal{K}$ such that 
	\begin{itemize}
		\item[(1)] $VS = qSV$
		\item[(2)] $\|S\|=\|T_2\|$ and
		\item[(3)] $\mathcal{H}$ is invariant under $S$ and
		$S|_{\mathcal{H}}=T_2$.
	\end{itemize}
\end{thm}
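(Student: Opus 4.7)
My plan is to deduce Theorem~\ref{qcommlift} as an immediate consequence of Theorem~\ref{sebintlift} (Sebesty\'en's intertwining lifting) by a judicious choice of the auxiliary contraction $R$ appearing there. The key observation is that since $|q|\le 1$, the scalar operator $R:=qI_{\mathcal{K}}$ is itself a contraction on $\mathcal{K}$, and, being a scalar, $R$ commutes with every projection $P_n$ onto the subspaces $\bigvee_{m=0}^{n}V^{*m}\mathcal{H}$ featured in that theorem. The strategy is thus to absorb the $q$-twist in $T_1T_2=qT_2T_1$ entirely into this $R$.

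Concretely, I would apply Theorem~\ref{sebintlift} with $S_0=T_0=T_1$ acting on $\mathcal{H}_0=\mathcal{K}_0=\mathcal{H}$, so that both minimal co-isometric extensions $U$ and $V$ of that theorem may be taken equal to the given $V$ on $\mathcal{K}$. With $R:=qI_{\mathcal{K}}$ as above and with $X_0:=T_2P_0:\mathcal{K}\to\mathcal{K}$ (the canonical extension of $T_2$ from $\mathcal{H}$ to $\mathcal{K}$ by $0$ on $\mathcal{K}\ominus\mathcal{H}$, so that $\|X_0\|=\|T_2\|$), a direct computation shows that the intertwining hypothesis $T_0X_0=X_0S_0RP_0$ reduces, after using $P_0|_{\mathcal{H}}=I$ and $T_1(\mathcal{H})\subseteq\mathcal{H}$, to the assumed relation $T_1T_2=qT_2T_1$.

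Invoking Theorem~\ref{sebintlift} then yields an operator $S:\mathcal{K}\to\mathcal{K}$ that agrees with $X_0$ on $\mathcal{H}$ (so that $\mathcal{H}$ is $S$-invariant and $S|_{\mathcal{H}}=T_2$, which is (3)), has norm $\|S\|=\|X_0\|=\|T_2\|$ (which is (2)), and intertwines $V$ with $UR$: $VS=S(UR)=S(qV)=qSV$ (which is (1)). This completes the proof.

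The main obstacle is really just conceptual: recognizing that Sebesty\'en's free contraction $R$ is the natural place to encode the $q$-twist. Once this is identified, the two conditions on $R$ (contractivity and commutation with the $\{P_n\}$) follow automatically from $|q|\le 1$ and the scalar nature of $R$, while the remaining intertwining hypothesis on $X_0$ is an immediate consequence of $T_1T_2=qT_2T_1$, and no further analytic work is needed.
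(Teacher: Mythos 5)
Your proposal is correct and is essentially the paper's own proof: the paper likewise deduces the theorem as an immediate consequence of Theorem \ref{sebintlift} by taking $R=qI_{\mathcal{K}}$ (a scalar contraction since $|q|\leq 1$, hence commuting with all the $P_n$), $T_1$ in place of both $S_0$ and $T_0$ (so that $U=V$), and $T_2$ (i.e.\ $T_2P_0$) in place of $X_0$, reading off $VS=S(qV)=qSV$, $\|S\|=\|T_2\|$ and $S|_{\mathcal{H}}=T_2$ from the conclusion. Your write-up simply makes explicit the verification $T_0X_0=X_0S_0RP_0 \Leftrightarrow T_1T_2=qT_2T_1$, which the paper leaves implicit.
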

\begin{proof}
	This result follows as a consequence of Theorem \ref{sebintlift}. Considering $qI_{\mathcal{K}}$ in place of $R$, $T_1$ in place of $T_0$ as well as $S_0$ and $T_2$ in place of $X_0$ in Theorem \ref{sebintlift}, there exists a bounded linear operator $S:\mathcal{K}\to \mathcal{K}$ such that the conditions $(1)$ and $(2)$ hold and $S|_{\mathcal{H}}=T_2$. As a consequence, for any $h\in \mathcal{H}$, $$V^nS^mh=V^nT_2^mh=T_1^nT_2^mh.$$ Hence $(3)$ holds and the proof is complete.   
\end{proof}
Removing the minimality constraint from Theorem \ref{qcommlift}, we obtain the following generalized version.
\begin{thm}\label{qcomexminno}
	Let $T_1$ be a contraction and $T_2$ be an operator on a Hilbert space $\mathcal{H}$. Suppose $T_1T_2=qT_2T_1$, where $0<|q|\leq 1$. Let $V$ on $\mathcal{K}$ be a co-isometric extension of $T_1$. Then there exists a bounded linear operator $S:\mathcal{K} \rightarrow \mathcal{K}$ such that 
	\begin{itemize}
		\item[(1)] $VS = qSV$
		\item[(2)] $\|S\|=\|T_2\|$ and
		\item[(3)] $\mathcal{H}$ is invariant under $S$ and
		$S|_{\mathcal{H}}=T_2$.
	\end{itemize}
\end{thm}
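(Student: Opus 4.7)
The plan is to reduce to the minimal-extension case, which is exactly Theorem \ref{qcommlift}. Set $\mathcal{K}_{\min} = \bigvee_{n \geq 0} V^{*n}\mathcal{H}$. By construction this subspace is invariant under $V^*$. I would next show that it is also invariant under $V$, so that $\mathcal{K}_{\min}$ reduces $V$. For the spanning vectors $V^{*n}h$ with $h \in \mathcal{H}$: when $n \geq 1$, the co-isometry relation $VV^* = I_{\mathcal{K}}$ gives $V(V^{*n}h) = V^{*(n-1)}h \in \mathcal{K}_{\min}$; when $n = 0$, since $V$ extends $T_1$ (so $V|_{\mathcal{H}} = T_1$) and $T_1\mathcal{H} \subseteq \mathcal{H}$, one has $Vh = T_1 h \in \mathcal{H} \subseteq \mathcal{K}_{\min}$. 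By linearity and continuity, $V\mathcal{K}_{\min} \subseteq \mathcal{K}_{\min}$.

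Once $\mathcal{K}_{\min}$ is a reducing subspace, $V$ decomposes as $V = V_{\min} \oplus V'$ with respect to $\mathcal{K} = \mathcal{K}_{\min} \oplus \mathcal{K}_{\min}^{\perp}$. Then $V_{\min} := V|_{\mathcal{K}_{\min}}$ is a co-isometry on $\mathcal{K}_{\min}$ with $V_{\min}|_{\mathcal{H}} = T_1$, and the identity $\mathcal{K}_{\min} = \bigvee_n V_{\min}^{*n}\mathcal{H}$ shows that $V_{\min}$ is the \emph{minimal} co-isometric extension of $T_1$. Theorem \ref{qcommlift} then supplies an operator $S_{\min}\colon \mathcal{K}_{\min} \to \mathcal{K}_{\min}$ satisfying $V_{\min}S_{\min} = qS_{\min}V_{\min}$, $\|S_{\min}\| = \|T_2\|$, and $S_{\min}|_{\mathcal{H}} = T_2$.

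Finally, I would set $S = S_{\min} \oplus 0$ on $\mathcal{K} = \mathcal{K}_{\min} \oplus \mathcal{K}_{\min}^{\perp}$. The block-diagonal form of $V$ gives $VS = V_{\min}S_{\min} \oplus 0 = qS_{\min}V_{\min} \oplus 0 = qSV$, while $\mathcal{H} \subseteq \mathcal{K}_{\min}$ yields $S|_{\mathcal{H}} = T_2$ and $\|S\| = \|S_{\min}\| = \|T_2\|$. I expect the main obstacle to be the verification that $\mathcal{K}_{\min}$ is $V$-invariant (and not merely $V^*$-invariant): this is the only step that genuinely uses both the co-isometry condition $VV^* = I$ and the extension property $V|_{\mathcal{H}} = T_1$ in tandem. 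Once this reducing decomposition is established, the minimal-case result and a trivial extension by zero finish the argument.
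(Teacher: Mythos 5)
Your proposal is correct and follows essentially the same route as the paper: both reduce to the minimal co-isometric extension (the paper takes the smallest reducing subspace for $V$ containing $\mathcal{H}$, which coincides with your $\bigvee_{n\geq 0}V^{*n}\mathcal{H}$), apply Theorem \ref{qcommlift} there, and extend by zero on the orthogonal complement. Your explicit verification that $\bigvee_{n\geq 0}V^{*n}\mathcal{H}$ is $V$-invariant via $VV^*=I$ and $V|_{\mathcal{H}}=T_1$ is a detail the paper leaves implicit (citing Corollary 4.1 of Douglas--Muhly--Pearcy), but it is the same argument.
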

\begin{proof}
	The idea of proof is same as that of Corollary 4.1 in \cite{Douglas}.
	Let $\mathcal{K}'\subseteq \mathcal{K}$ be the smallest reducing subspace for $V$ that contains $\mathcal{H}$. If we put  $V'=V|_{\mathcal{K}'}$, then $(V',\mathcal{K}')$ is the minimal co-isometric extension of $T$. Thus by Theorem \ref{qcommlift}, there exists $S'\in \mathcal{B}(\mathcal{K}')$ such that $\|S'\|=\|T_2\|$, $V'S'=qS'V'$, $\mathcal{H}$ is invariant under $S'$ and $S'|_{\mathcal{H}}=T_2$. Now we define $S:\mathcal{K}\to \mathcal{K}$ as $S(k)=S'(k)$ for any $k\in \mathcal{K}'$ and $S(k)=0$ on $\mathcal{K}\ominus\mathcal{K}'$. Hence clearly $S$ satisfies properties $(1),(2)$ and $(3)$ as desired. This completes the proof.
\end{proof}
	In Theorem \ref{qcommlift}, in particular if $T_2$ is also a contraction, then one might expect $S$ to be a co-isometric extension of $T_2$. But for $|q|\neq 1$, this is not possible as $\|VS\|=1$ and $\|qSV\|=|q|$. Although, we can expect the existence of co-isometric extensions $V_1,V_2,qV_q$ of $T_1,T_2,qT_1$ respectively, such that $V_1V_2=qV_2V_q$. In this Section, we obtain such $V_1,V_2,qV_q$ when $T_1,T_2$ are $q$-commuting contractions with $\|T_2\|<1$ and $0<|q|\leq \dfrac{1}{\|T_1\|}$, see Theorem \ref{qcoisoext}. Indeed, this is one of the main theorems of this paper, but before going to that we state and prove another result which is also one of our main results.
	 
	\begin{thm}\label{Dmain}
		Let $T$ be a non-zero contraction on Hilbert space $\mathcal{H}$ and $X\in \mathcal{B}(\mathcal{H})$ be such that $TX=qXT$ for $\, 0<|q|\leq \dfrac{1}{\|T\|}$. Suppose $(V,\mathcal{K}')$ and $(qV_q,\mathcal{K})$ are the minimal co-isometric extensions of $T$ and $qT$ respectively. Then there exists an operator $Y\in \mathcal{B}(\mathcal{K},\mathcal{K}')$ such that 
		\[ VY=qYV_q,\; Y(\mathcal{H})\subseteq \mathcal{H},\; Y|_{\mathcal{H}}=X \text{ and } \|X \|=\|Y\|  .\] 
	\end{thm}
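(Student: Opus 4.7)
The plan is to recognize the hypothesis $TX=qXT$ as an ordinary (not $q$-twisted) intertwining relation after absorbing the scalar $q$ into $T$, so that the classical Sz.-Nagy--Foias theorem on the lifting of intertwining operators (Theorem \ref{douglasintertwining}) applies directly.

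First I would observe that because $|q|\leq 1/\|T\|$, the operator $qT$ is itself a contraction with $\|qT\|=|q|\|T\|\leq 1$, so it makes sense to speak of the minimal co-isometric extension $(qV_q,\mathcal{K})$ of $qT$ as in the statement. Next, I would rewrite the $q$-commutation relation as
\[
TX \;=\; qXT \;=\; X(qT),
\]
which says that $X\colon \mathcal{H}\to\mathcal{H}$ intertwines the two contractions $qT$ (on $\mathcal{H}$) and $T$ (on $\mathcal{H}$) in the ordinary sense $X(qT)=TX$.

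The main step is now a direct invocation of Theorem \ref{douglasintertwining} with $\mathcal{H}_1=\mathcal{H}_2=\mathcal{H}$, with $T_1=qT$ whose unique minimal co-isometric extension on $\mathcal{K}$ is $V_1=qV_q$, and with $T_2=T$ whose unique minimal co-isometric extension on $\mathcal{K}'$ is $V_2=V$. This produces an operator $Y\colon\mathcal{K}\to\mathcal{K}'$ satisfying
\[
Y(qV_q) \;=\; V\,Y, \qquad Y(\mathcal{H})\subseteq \mathcal{H}, \qquad Y|_{\mathcal{H}}=X, \qquad \|Y\|=\|X\|.
\]
Pulling the scalar $q$ out of the left-hand side rewrites the intertwining identity as $VY=qYV_q$, which is exactly the conclusion of Theorem \ref{Dmain}.

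There is really no serious obstacle here; the only conceptual point is verifying that the hypothesis $|q|\leq 1/\|T\|$ is precisely what is needed to place $qT$ inside the class of contractions where Theorem \ref{douglasintertwining} applies, and that the unique minimal co-isometric extension of $qT$ is the scaled operator $qV_q$ (which is given as part of the hypothesis, so no extra argument is required). The whole proof is then essentially a one-line reduction.
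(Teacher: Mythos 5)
Your proof is correct, and it takes a genuinely different (and much shorter) route than the paper. The reduction is sound: since $|q|\le 1/\|T\|$ makes $qT$ a contraction, the hypothesis $TX=qXT$ is literally the untwisted intertwining identity $X(qT)=TX$, and applying Theorem \ref{douglasintertwining} with $T_1=qT$, whose minimal co-isometric extension is $V_1=qV_q$ on $\mathcal{K}$, and $T_2=T$, with $V_2=V$ on $\mathcal{K}'$, yields $Y\in\mathcal{B}(\mathcal{K},\mathcal{K}')$ satisfying $Y(qV_q)=VY$, $Y\mathcal{H}\subseteq\mathcal{H}$, $Y|_{\mathcal{H}}=X$ and $\|Y\|=\|X\|$ — the conclusion verbatim. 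The paper instead follows Sebesty\'en's method: it constructs $Y$ by induction over the finite stages $K_n=\bigvee_{\ell=0}^{n}(qV_q)^{*\ell}\mathcal{H}$ and $K_n'=\bigvee_{m=0}^{n}V^{*m}\mathcal{H}$, verifying at each step the sesquilinear compatibility condition $\langle (qV_q)^*Y_{n-1}^*V(V^*h_1),h_2\rangle=\langle V^*h_1,Y_{n-1}h_2\rangle$ so that the dual Parrott theorem (Theorem \ref{dualparrot}) furnishes a norm-preserving one-step extension $Y_n$, and then passes to a strong limit. Your approach buys brevity and makes transparent why $|q|\le 1/\|T\|$ is exactly the right hypothesis (it is precisely what places $qT$ among the contractions); the paper's longer construction buys independence from the Sz.-Nagy--Foias lifting theorem — the introduction explicitly advertises ``independent proofs to some existing results'' — plus the finer filtration information that $Y$ maps each $K_n$ into $K_n'$ compatibly with the projections $P_n$, $P_n'$. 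One caveat to keep in mind: Theorem \ref{douglasintertwining} is itself typically derived from commutant lifting, so your argument is shorter but not logically more elementary; it is nonetheless a fully legitimate proof within the paper's stated toolbox.
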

	\begin{proof}
		We follow the idea of the proof of Theorem 2 in \cite{Sebestyen}. Since $(V,\mathcal{K}')$ and $(qV_q, \mathcal{K})$ are the minimal co-isometric extensions of $T$ and $qT$ respectively, we have 
		\[
		\mathcal{K}'= \bigvee_{n=0}^{\infty}V^{*n}\mathcal{H} \;\text{  and  }\; \mathcal{K}=\bigvee_{n=0}^{\infty}(qV_q)^{*n}\mathcal{H}.
		\]
		Let $K_n = \bigvee\limits_{\ell = 0}^{n}(qV_q)^{*\ell}\mathcal{H}$ and $K_n' = \bigvee\limits_{m=0}^n V^{*m}\mathcal{H}$. Then $\bigcup\limits_{n = 0}^{\infty} K_n$ is dense in $\mathcal{K}$ and $\bigcup\limits_{n = 0}^{\infty}K_n'$ is dense in $\mathcal{K}'$. Consider the orthogonal projections $P_n : \mathcal{K}\rightarrow K_n$ and $P_n': \mathcal{K}' \rightarrow K_n'$ for all $n \geq 0$. Clearly 
		\begin{equation}\label{eq1'}
		P_{n+1}(qV_q)^* = (qV_q)^*P_{n} \;\text{ and }\; P_{n+1}'V^* = V^*P_n'.
		\end{equation}
		We construct the required operator $Y:\mathcal{K} \rightarrow \mathcal{K}'$ inductively by finding operators $Y_n : K_n \rightarrow K_n'$ for all $n \in \mathbb N$.\\
		
		\noindent\textbf{Claim.} For any $h_1,h_2\in \mathcal{H}$, $\langle (qV_q)^*X^*V(V^*h_1), h_2 \rangle = \langle (V^*h_1), Xh_2 \rangle$.\\
		
		\noindent \textbf{Proof of Claim.} Since $V$ and $qV_q$ are the minimal co-isometric extensions of $T$ and $qT$ respectively, then for $h_1,h_2\in \mathcal{H}$ we have 
		\begingroup
		\allowdisplaybreaks
		\begin{align*}
		\langle (qV_q)^*X^*V(V^*h_1), h_2 \rangle  = \langle (qV_q)^*X^*h_1,h_2 \rangle = \langle X^*h_1, (qV_q)h_2 \rangle & = \langle X^*h_1, qTh_2 \rangle 		\\& = \langle h_1, qXTh_2 \rangle\\
		& = \langle h_1, TXh_2 \rangle \; \; (\text{since } TX=qXT)\\
		& = \langle h_1, VXh_2 \rangle\\
		& = \langle V^*h_1, Xh_2 \rangle.
		\end{align*}
		\endgroup
		This completes the proof of the claim.\\
		
		\noindent Therefore, by Theorem \ref{dualparrot}, there exists an operator $Y_1^*\in \mathcal{B}(K_1',K_1)$ such that $Y_1|_{\mathcal{H}}=X$, $Y_1^*|_{V^*\mathcal{H}}=(qV_q)^*X^*V|_{V^*\mathcal{H}}$ and $\|Y_1\|\leq \text{max}\{\|X\|,\|(qV_q)^*X^*V|_{V^*\mathcal{H}}\| \}=\|X\|$. Since $Y_1$ is an extension of $X$, we have $\|Y_1\|=\|X\|$. Also, we have 
		\begingroup
		\allowdisplaybreaks
		\begin{align}\label{eq2'}
		Y_1^*P'_1V^* &= Y_1^*V^*P'_0 \hspace{7.05cm} (\text{from }\eqref{eq1'})\notag\\
		& = (qV_q)^*X^*VV^*P'_0 \hspace{1.8cm}  \left(\text{since }Y_1^*|_{V^*\mathcal{H}}=(qV_q)^*X^*V|_{V^*\mathcal{H}}\right)\notag\\
		& = (qV_q)^*X^*P'_0.
		\end{align}
		\endgroup
		
		\noindent Suppose there is $Y_{n-1}$ such that $Y_{n-1}^*: K_{n-1}'\rightarrow K_{n-1}$ satisfies
		\begin{align}
			&(a)\;  Y_{n-1}|_{K_{n-2}}=Y_{n-2}, \notag \\
			& (b) \; Y^*_{n-1}|_{V^*K_{n-2}'}=(qV_q)^*Y_{n-2}^*V|_{V^*K_{n-2}'}, \notag
			\\& (c) \; Y_{n-1}^*P_{n-1}'V^* = (qV_q)^*Y_{n-2}^*P_{n-2}', \label{eqn:011} \\
			& (d) \; \|Y_{n-1}\|=\|Y_{n-2}\| \notag.
		\end{align}
	
	\noindent\textbf{Claim.}	For any $h_1\in K_{n-1}' \text{ and }h_2 \in K_{n-1}$, we have $\langle (qV_q)^*Y_{n-1}^*V(V^*h_1), h_2 \rangle = \langle V^*h_1, Y_{n-1}h_2 \rangle.$\\
	
	\noindent \textbf{Proof of Claim.} Suppose $h_1\in K_{n-1}' \text{ and }h_2 \in K_{n-1}$. Then
		\begingroup
		\allowdisplaybreaks
		\begin{align*}
		\langle (qV_q)^*Y_{n-1}^*V(V^*h_1), h_2 \rangle & = \langle (qV_q)^*Y_{n-1}^*h_1,h_2 \rangle \\
		& = \langle P_{n-1}(qV_q)^*Y_{n-1}^*h_1, h_2 \rangle\; (\text{since }(qV_q)^*K_{n-1}\subseteq K_{n}) \\
		& = \langle (qV_q)^*P_{n-2}Y_{n-1}^*h_1, h_2 \rangle \;(\text{using }\eqref{eq1'}) \\
		& = \langle h_1, Y_{n-1}P_{n-2}(qV_q)h_2 \rangle\\
		& = \langle P_{n-2}'h_1, Y_{n-1}P_{n-2}(qV_q)h_2 \rangle\;(\text{since }Y_{n-1}K_{n-2}\subseteq K_{n-2}')\\
		& = \langle P_{n-2}'h_1, Y_{n-2}P_{n-2}(qV_q)h_2 \rangle \; (\text{using (a)})\\
		& = \langle Y_{n-2}^*P_{n-2}'h_1, P_{n-2}(qV_q)h_2 \rangle \\
		& = \langle Y_{n-2}^*P_{n-2}'h_1, (qV_q)h_2 \rangle\; (\text{since }Y_{n-2}^*K_{n-2}'\subseteq K_{n-2}) \\
		& = \langle (qV_q)^*Y_{n-2}^*P_{n-2}'h_1, h_2 \rangle\\
		& = \langle Y_{n-1}^*P_{n-1}'V^*h_1, h_2 \rangle \; (\text{using (c)})\\
		& = \langle V^*h_1, Y_{n-1}h_2 \rangle.
		\end{align*}
		\endgroup
		The last inequality follows from the fact that $Y_{n-1}h_2\in K_{n-1}'$, for any $h_2\in K_{n-1}$. This completes the proof of claim.\\
		
 Hence, Theorem \ref{dualparrot} guarantees the existence of $Y_n: K_n \to K_n'$ satisfying 
		\begin{itemize}
			\item[(i)] $Y_{n}|_{K_{n-1}}=Y_{n-1}$,
			\item[(ii)] $Y^*_{n}|_{V^*K_{n-1}'}=(qV_q)^*Y_{n-1}^*V|_{V^*K_{n-1}'}$,
			\item[(iii)] $Y_{n}^*P_{n}'V^* = (qV_q)^*Y_{n-1}^*P_{n-1}'$,
			\item[(iv)] $\|Y_{n}\|=\|Y_{n-1}\|=\|X\|$.
		\end{itemize} 
Note that condition-$(iii)$ above is obtained in the following way:
	\[
		Y_n^*P_n'V^*  = Y_{n}^*V^*P_{n-1}' = (qV_q)^*Y_{n-1}^*VV^*P_{n-1}' = (qV_q)^*Y_{n-1}^*P_{n-1}',
		\]
		where the second last equality follows from condition-$(c)$ of (\ref{eqn:011}). Thus, by induction the result holds for any natural number $n$. Now define $Y_0:\bigcup\limits_{n=0}^{\infty} \mathcal{K}_n \rightarrow \bigcup\limits_{n=0}^{\infty} \mathcal{K}_n'$ such that $Y_0|_{K_n}=Y_n$. This is well defined as $Y_{n}|_{K_{n-1}} = Y_{n-1}$. Since $\bigcup\limits_{n = 0}^{\infty}K_n'$ is dense in $\mathcal{K}'$, by continuity $Y_0$ extends to an operator $Y:\mathcal{K}\to \mathcal{K}'$.  From (iii) we have $VY_nP_n=Y_{n-1}P_{n-1}qV_q$ and one can easily check that $Y_nP_n$ converges to $Y$ in the strong operator topology. Therefore, $
		VY = qYV_q.
		$
		Clearly $\|Y\|=\|X\|$. This completes the proof of the theorem.
	\end{proof}

 If we drop the minimality conditions on co-isometric extension in Theorem \ref{Dmain}, then we have the following result.
	\begin{cor}\label{q-exten}
		Let $T$ be a contraction on Hilbert space $\mathcal{H}$ and $X\in \mathcal{B}(\mathcal{H})$ be such that $TX=qXT$ for $0<|q|\leq \dfrac{1}{\|T\|}$. Suppose $(V,\mathcal{K}')$ and $(qV_q,\mathcal{K})$ are co-isometric extensions of $T$ and $qT$ respectively. Then there exists an operator $Y\in \mathcal{B}(\mathcal{K},\mathcal{K}')$ such that 
		\[ VY=qYV_q,\; Y(\mathcal{H})\subseteq \mathcal{H},\; Y|_{\mathcal{H}}=X \text{ and } \|X \|=\|Y\|  .\] 
	\end{cor}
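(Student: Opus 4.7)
The plan is to reduce to the minimal case already established in Theorem~\ref{Dmain}, exactly as Theorem~\ref{qcomexminno} was deduced from Theorem~\ref{qcommlift}. First I would let $\mathcal{K}_0 \subseteq \mathcal{K}$ be the smallest reducing subspace for $qV_q$ containing $\mathcal{H}$, and $\mathcal{K}_0' \subseteq \mathcal{K}'$ be the smallest reducing subspace for $V$ containing $\mathcal{H}$. Writing $V_q' := V_q|_{\mathcal{K}_0}$ and $V' := V|_{\mathcal{K}_0'}$, the operators $V'$ and $qV_q'$ are the (unique) minimal co-isometric extensions of $T$ and $qT$ acting on $\mathcal{K}_0'$ and $\mathcal{K}_0$ respectively.

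Next I would apply Theorem~\ref{Dmain} to the contraction $T$, the operator $X$, and the minimal co-isometric extensions $(V', \mathcal{K}_0')$ and $(qV_q', \mathcal{K}_0)$. This supplies an operator $Y_0 \in \mathcal{B}(\mathcal{K}_0, \mathcal{K}_0')$ with
\[
V'Y_0 = qY_0V_q', \qquad Y_0(\mathcal{H}) \subseteq \mathcal{H}, \qquad Y_0|_{\mathcal{H}} = X, \qquad \|Y_0\| = \|X\|.
\]

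Finally I would define $Y : \mathcal{K} \to \mathcal{K}'$ by $Y(k_0 \oplus k_1) = Y_0 k_0$ whenever $k_0 \in \mathcal{K}_0$ and $k_1 \in \mathcal{K} \ominus \mathcal{K}_0$. The properties $Y(\mathcal{H}) \subseteq \mathcal{H}$, $Y|_{\mathcal{H}} = X$, and $\|Y\| = \|X\|$ (the last because $Y$ extends $X$) are immediate from the corresponding properties of $Y_0$. For the intertwining $VY = qYV_q$ one splits any $k \in \mathcal{K}$ into its $\mathcal{K}_0$ and $\mathcal{K} \ominus \mathcal{K}_0$ components: on $\mathcal{K}_0$ both sides agree by the intertwining relation for $Y_0$, since $V_q$ preserves $\mathcal{K}_0$ and $V$ preserves $\mathcal{K}_0'$ (so $V Y_0 k_0 = V' Y_0 k_0 = q Y_0 V_q' k_0 = q Y V_q k_0$); on $\mathcal{K} \ominus \mathcal{K}_0$ one uses that $\mathcal{K}_0$ reduces $qV_q$, forcing $V_q k_1 \in \mathcal{K} \ominus \mathcal{K}_0$, so that both $YV_q k_1$ and $VYk_1$ vanish.

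There is essentially no obstacle here: the work has already been absorbed into Theorem~\ref{Dmain}, and the remaining step is the standard reducing-subspace argument that also drove Theorem~\ref{qcomexminno}. The only point that requires a moment of care is verifying that $\mathcal{K}_0$ reduces $qV_q$ (rather than being merely invariant), which is what allows the off-diagonal component to be killed and the intertwining to survive on the orthogonal complement.
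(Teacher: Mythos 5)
Your proof is correct and follows essentially the same route as the paper's: restrict to the smallest reducing subspaces for $qV_q$ and $V$ containing $\mathcal{H}$, invoke Theorem \ref{Dmain} for the resulting minimal co-isometric extensions, and extend the intertwiner by zero on $\mathcal{K}\ominus\mathcal{K}_0$, using that $\mathcal{K}_0$ reduces $qV_q$ so that the intertwining relation survives on the orthogonal complement. Your verification of $VY=qYV_q$ componentwise is in fact slightly more careful than the paper's own write-up.
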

	\begin{proof}
		Let $\mathcal{K}_1\subseteq \mathcal{K}$ and $\mathcal{K}_1'\subseteq \mathcal{K}'$ be the smallest reducing subspaces containing $\mathcal{H}$, for $qV_q$ and $V$ respectively. Let $V_1=V|_{\mathcal{K}_1}$ and $qV_2=(qV_q)|_{\mathcal{K}_1'}$. Then $V_1$ and $qV_2$ are minimal co-isometric extensions of $T$ and $qT$ respectively. By Theorem \ref{Dmain}, there exists $Y_1\in \mathcal{B}(\mathcal{K}_1, \mathcal{K}_1')$ such that $V_1Y_1 = qY_1V_2$, $Y_1\mathcal{H}\subseteq \mathcal{H}$, $Y_1|_{\mathcal{H}}= X$ and $\|Y_1\|=\|X\|$. 
		Define $Y:\mathcal{K} \to \mathcal{K}'$ as $Y(k)=Y_1(k)$ for $k \in \mathcal{K}_1$ and $Y(k)=0$ for $k \in \mathcal{K} \ominus \mathcal{K}_1$. Clearly $Y(\mathcal{H})\subseteq \mathcal{H},\; Y|_{\mathcal{H}}=X \text{ and } \|X \|=\|Y\|$. 
		As $qV_q$ reduces $\mathcal{K}_1$, for $k \in \mathcal{K} \ominus \mathcal{K}_1$, $qV_q(k)\in \mathcal{K}\ominus \mathcal{K}_1$. Hence $VY(k)=0, \; qYV_q(k)=0$ and the proof is complete.
	\end{proof}

Now we are in a position to present one of our main results.
\begin{thm}\label{qcoisoext}
	Let $T_1$ be any contraction and $T_2$ be a strict contraction on Hilbert space $\mathcal{H}$, such that $T_1T_2=qT_2T_1$  where $0<|q|\leq \dfrac{1}{\|T_1\|}$. Then there exist a Hilbert space $\mathcal{K}$ containing $\mathcal{H}$ and co-isometric extensions $X_1, X_2, qX_q$ on $\mathcal{K}$ of $T_1,T_2,qT_1$ such that  $X_1X_2=qX_2X_q$.
	
	Moreover, if $T_1$ is also a strict contraction then $X_1,X_2,qX_q$ are pure co-isometries. 
\end{thm}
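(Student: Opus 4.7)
The plan is to bootstrap from Theorem~\ref{Dmain} via Corollary~\ref{q-exten}, and then to upgrade the resulting strict contraction to a co-isometry through a Schaffer-type construction. First, by applying Corollary~\ref{q-exten} to appropriate (non-minimal) Schaffer-type co-isometric extensions of $T_1$ and $qT_1$ on a common Hilbert space $\mathcal{H}\oplus \ell^2(\mathcal{E})$, with $\mathcal{E}$ large enough to accommodate both defect spaces $\mathcal{D}_{T_1^*}$ and $\mathcal{D}_{(qT_1)^*}$, one produces a Hilbert space $\mathcal{K}_0 \supseteq \mathcal{H}$, a co-isometric extension $V$ of $T_1$ and a co-isometric extension $qV_q$ of $qT_1$ on $\mathcal{K}_0$, together with an operator $Y \in \mathcal{B}(\mathcal{K}_0)$ satisfying $VY = qYV_q$, $Y(\mathcal{H}) \subseteq \mathcal{H}$, $Y|_{\mathcal{H}} = T_2$, and $\|Y\| = \|T_2\| < 1$. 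The hypothesis $|q|\le 1/\|T_1\|$ is used precisely to guarantee that $qT_1$ is a contraction, so its co-isometric extension exists.

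The key algebraic input is the identity
\begin{equation*}
V\, D_{Y^*}^2\, V^* \;=\; V(I - YY^*)V^* \;=\; I - (VY)(VY)^* \;=\; I - |q|^2\, Y V_q V_q^* Y^* \;=\; I - YY^* \;=\; D_{Y^*}^2,
\end{equation*}
obtained using $VV^* = I$, $VY = qYV_q$, and $|q|^2 V_q V_q^* = I$. Applying Douglas' Lemma~\ref{Dlemma} to the comparison $(VD_{Y^*})(VD_{Y^*})^* = D_{Y^*}^2 = D_{Y^*}D_{Y^*}^*$ yields a partial isometry $W \in \mathcal{B}(\mathcal{K}_0)$ with $V\,D_{Y^*} = D_{Y^*}\, W$; this encodes the compatibility of the defect operator $D_{Y^*}$ with the co-isometry $V$.

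Now set $\mathcal{K} = \mathcal{K}_0 \oplus \ell^2(\mathcal{K}_0)$ and let $X_2$ be the Schaffer co-isometric extension of $Y$,
\begin{equation*}
X_2(k,\xi_0,\xi_1,\xi_2,\ldots) \;=\; (Yk + D_{Y^*}\xi_0,\ \xi_1,\ \xi_2,\ \ldots),
\end{equation*}
which is a co-isometry (because $YY^* + D_{Y^*}^2 = I$) extending $T_2$. Extend $V$ and $qV_q$ to $X_1 = V \oplus U_V$ and $qX_q = qV_q \oplus U_q$ on $\mathcal{K}$, where the auxiliary co-isometries $U_V,U_q$ on $\ell^2(\mathcal{K}_0)$ are built from $W$ (after a further Schaffer extension of $W$ to a full co-isometry on $\ell^2(\mathcal{K}_0)$) so as to force the block intertwinings $V\,A = A\,U_q$ and $U_V\,D = D\,U_q$, where $A(\xi_0,\xi_1,\ldots) = D_{Y^*}\xi_0$ and $D$ is the backward shift on $\ell^2(\mathcal{K}_0)$. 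A direct $2\times 2$ block computation then yields $X_1 X_2 = q X_2 X_q$. For the moreover part, if $T_1$ is also a strict contraction, the minimal Nagy--Foias co-isometric extensions of $T_1$ and $qT_1$ are pure (their adjoints are unilateral shifts), $X_2$ is automatically pure because $\|Y\|<1$, and $U_V,U_q$ can be chosen as pure backward shifts on $\ell^2(\mathcal{K}_0)$, so that $X_1, X_2, qX_q$ are all pure co-isometries. The main technical obstacle is the last step, namely the explicit construction of $U_V$ and $U_q$ so that \emph{every} block identity arising from $X_1X_2 = qX_2X_q$ is simultaneously satisfied; this hinges on the intertwining $VD_{Y^*} = D_{Y^*}W$ together with a careful Schaffer-type upgrade of $W$ to a genuine co-isometry that interacts correctly with the shift structure on $\ell^2(\mathcal{K}_0)$.
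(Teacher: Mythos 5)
Your first half is exactly the paper's argument: Corollary \ref{q-exten} applied to co-isometric extensions of $T_1$ and $qT_1$ on a common space (the paper takes $\widetilde{\mathcal{K}}=\ell^2(\mathcal{H})$) produces $Y$ with $VY=qYV_q$, $Y|_{\mathcal{H}}=T_2$, $\|Y\|=\|T_2\|<1$, your identity $VD_{Y^*}^2V^*=D_{Y^*}^2$ is precisely the computation behind the paper's \eqref{T1}, and your $X_2$ is the paper's Schaffer-type pure co-extension of $Y$. The genuine gap is the step you yourself flag as ``the main technical obstacle'': you never actually construct $U_V$ and $U_q$, and the route you sketch would not work as described. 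Douglas' Lemma \ref{Dlemma} gives you a priori only a contraction $W$ with $VD_{Y^*}=D_{Y^*}W$, and your plan to perform ``a further Schaffer extension of $W$ to a full co-isometry on $\ell^2(\mathcal{K}_0)$'' is both unnecessary and damaging: a Schaffer co-extension alters the operator in its first block and lives on an enlarged space, so the forced intertwining $VA=AU_q$ --- which, since $A(\xi_0,\xi_1,\dots)=D_{Y^*}\xi_0$, pins down how $U_q$ acts on the zeroth coordinate --- is no longer guaranteed. Similarly, in the ``moreover'' part you assert that $U_V,U_q$ ``can be chosen as pure backward shifts''; they cannot be chosen freely, and the backward shift itself violates $VA=AU_q$ (it would require $VD_{Y^*}\xi_0=D_{Y^*}\xi_1$ for all $\xi$).

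The missing observation --- the one the paper's proof turns on --- is that $\|Y\|<1$ makes $D_{Y^*}$ \emph{invertible}, so the Douglas solution is unique, $W=D_{Y^*}^{-1}VD_{Y^*}$, and your key identity upgrades it from a contraction to a genuine co-isometry: $WW^*=D_{Y^*}^{-1}VD_{Y^*}^2V^*D_{Y^*}^{-1}=I$, which is exactly the paper's \eqref{T1}. No upgrade of $W$ is then needed: set $U_V=U_q=W\oplus W\oplus\cdots$ on $\ell^2(\mathcal{K}_0)$, i.e. $X_1=V\oplus W\oplus W\oplus\cdots$ and $qX_q=qV_q\oplus W\oplus W\oplus\cdots$, as in the paper. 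Then $VA=AU_q$ reduces to $VD_{Y^*}=D_{Y^*}W$, the identity $U_VD=DU_q$ holds trivially for a constant diagonal, and $X_1X_2=qX_2X_q$ follows in full. For the ``moreover'' part, taking $V$ pure (possible since $T_1$ is strict) gives $W^n=D_{Y^*}^{-1}V^nD_{Y^*}\to 0$ strongly --- the paper's \eqref{T2} --- whence $X_1$ and $qX_q$ are pure; your remark that $X_2$ is pure because $\|Y\|<1$ is correct. In short, your skeleton coincides with the paper's, but without the invertibility of $D_{Y^*}$ the decisive block construction remains unproved, and the substitute you propose would fail.
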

\begin{proof}
	Let $(V,\widetilde{\mathcal{K}})$ and $(qV_q,\widetilde{\mathcal{K}})$ be co-isometric extensions of $T_1$ and $qT_1 $ respectively (indeed, we can take $\widetilde{\mathcal{K}}=\ell^2(\mathcal{H})$). By Corollary \ref{q-exten}, there exists an operator $X$ on $\widetilde{\mathcal{K}}$ such that \begin{enumerate}
		\item $X|_{\mathcal{H}}=T_2$;
		\item $\|X\|=\|T_2\|$;
		\item $VX=qXV_q$.
	\end{enumerate} 
	Since $X$ is a strict contraction so $D_{X^*}$ is invertible on $\widetilde{\mathcal{K}}$. Consider the Hilbert space $\mathcal{K}=\bigoplus\limits_{0}^{\infty}\widetilde{\mathcal{K}}$. We embed $\mathcal{H}$ in $\mathcal{K}$ via a map $h \mapsto (h,0,0,\ldots)$.  Let $X_2, X_1$ and $qX_1$ be operators on $\mathcal{K}$ defined as follows 
	\begingroup
	\allowdisplaybreaks
	\begin{gather*}
	X_2=\begin{pmatrix}
	X&D_{X^*}&0&0&\cdots\\
	0&0&I_{\widetilde{\mathcal{K}}}&0&\cdots\\
	0&0&0&I_{\widetilde{\mathcal{K}}}&\cdots\\
	\vdots&\vdots&\vdots&\vdots&\ddots
	\end{pmatrix},\\
	X_1=\begin{pmatrix}
	V&0&0&\cdots \\
	0&D_{X^*}^{-1}VD_{X^*}&0&\cdots \\
	0&0&D_{X^*}^{-1}VD_{X^*}&\cdots \\
	\vdots&\vdots&\vdots&\ddots
	\end{pmatrix},\\
	qX_q=\begin{pmatrix}
	qV_q&0&0&\cdots \\
	0&D_{X^*}^{-1}VD_{X^*}&0&\cdots \\
	0&0&D_{X^*}^{-1}VD_{X^*}&\cdots \\
	\vdots&\vdots&\vdots&\ddots
	\end{pmatrix}.
	\end{gather*}
	\endgroup
	Clearly, $X_2,X_1$ and $qX_q$ are bounded linear operators on $\mathcal{K}$ and by construction $X_2$ is a pure co-isometry. Now we prove that $X_1$ and $qX_q$ are co-isometries. Since $V$ and $qV_q$ are co-isometries, it suffices to show that 
	\begin{equation}\label{T1}
	(D_{X^*}^{-1}VD_{X^*})(D_{X^*}^{-1}VD_{X^*})^*=I_{\widetilde{\mathcal{K}}}.
	\end{equation}
	Using $qXV_q=VX$, $VV^*=I_{\widetilde{\mathcal{K}}}$ and $(qV_q)(qV_q)^*=I_{\widetilde{\mathcal{K}}}$ we have
	\begingroup
	\allowdisplaybreaks
	\begin{align*}
	(D_{X^*}^{-1}VD_{X^*})(D_{X^*}^{-1}VD_{X^*})^*&=D_{X^*}^{-1}VD_{X^*}^2V^*D_{X^*}^{-1}\\
	&=D_{X^*}^{-1}V(I_{\widetilde{\mathcal{K}}}-XX^*)V^*D_{X^*}^{-1}\\
	&=D_{X^*}^{-1}(I_{\widetilde{\mathcal{K}}}-VX(VX)^*)D_{X^*}^{-1}\\
	&=D_{X^*}^{-1}(I_{\widetilde{\mathcal{K}}}-qXV_q(qXV_q)^*)D_{X^*}^{-1}\\
	&=D_{X^*}^{-1}(I_{\widetilde{\mathcal{K}}}-XX^*)D_{X^*}^{-1}\\
	&=I_{\widetilde{\mathcal{K}}}.
	\end{align*}
	\endgroup
	By direct computation and using $VX=qXV_q$, we get $X_1X_2=qX_2X_q$. We have that $V,qV_q$ and $X$ are extensions of $T_1,qT_1$ and $T_2$ respectively. Thus $X_1,qX_q$ and $X_2$ are extensions of $T_1,qT_1$ and $T_2$ respectively. 
	
	Again if $T_1$ is a strict contraction then we can assume $V$ to be a pure co-isometric extension of $T_1$ and we have 
	\begin{equation}\label{T2}
	(D_{X^*}^{-1}VD_{X^*})^n=D_{X^*}^{-1}V^nD_{X^*}.
	\end{equation}
	Since $V$ and $qV_q$ are pure, we obtain from equations (\ref{T1}) and (\ref{T2}) that $X_1$ and $qX_q$ are pure co-isometric extensions of $V$ and $qV_q$ respectively. The proof is now complete. 
\end{proof}

By an application of Theorem \ref{qcoisoext}, we obtain the following two lemmas.

\begin{lem}
	Let $T_1,T_2\in \mathcal{B}(\mathcal{H})$ be contractions such that $T_1 \neq 0$, $\|T_2\|<1 $ and $T_1T_2=qT_2T_1$ for some $0<|q|\leq \dfrac{1}{\|T_1\|}$. Suppose $\widetilde{X}_2$ is a co-isometric extension of $T_2$ on $\widetilde{\mathcal{K}}$. Then there exist co-isometric extensions $V_1,V_2$ and $qV_q$ of $T_1,\widetilde{X}_2$ and $qT_1$ on a Hilbert space $\mathcal{K}\supseteq\widetilde{\mathcal{K}}$ respectively such that $V_1V_2 = qV_2V_q$. 
\end{lem}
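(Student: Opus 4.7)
The plan is to deduce this lemma from Theorem \ref{qcoisoext} by noting that every co-isometric extension of a contraction decomposes as the minimal one plus a co-isometric orthogonal summand, and then gluing the two such decompositions for $\widetilde{X}_2$ and for the $X_2$ produced by Theorem \ref{qcoisoext}.

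First I would apply Theorem \ref{qcoisoext} to $(T_1,T_2)$ (the hypotheses $\|T_2\|<1$ and $0<|q|\leq 1/\|T_1\|$ are exactly what that theorem needs) to obtain co-isometric extensions $X_1,X_2,qX_q$ of $T_1,T_2,qT_1$ on some Hilbert space $\mathcal{K}_0\supseteq\mathcal{H}$ satisfying $X_1X_2=qX_2X_q$. Next, for any co-isometric extension $(Y,\mathcal{L})$ of a contraction $T$, the subspace $\mathcal{L}^{\min}:=\bigvee_{n\geq 0}Y^{*n}\mathcal{H}$ is $Y^*$-invariant by construction and $Y$-invariant because $YY^{*n}h=Y^{*(n-1)}h\in\mathcal{L}^{\min}$ for $n\geq 1$ (using $YY^*=I$). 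Hence $\mathcal{L}^{\min}$ reduces $Y$, and $Y=Y^{\min}\oplus C$, where $Y^{\min}:=Y|_{\mathcal{L}^{\min}}$ is the minimal co-isometric extension of $T$ and $C$ is a co-isometry on $\mathcal{L}\ominus\mathcal{L}^{\min}$. Applying this to both $\widetilde{X}_2$ and $X_2$, and invoking uniqueness of the minimal co-isometric extension up to isomorphism fixing $\mathcal{H}$, I would replace $\mathcal{K}_0$ by a unitarily equivalent copy so that the minimal parts coincide with a common pair $(X_2^{\min},\mathcal{K}^{\min})$ sitting inside both $\widetilde{\mathcal{K}}$ and $\mathcal{K}_0$. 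Then
\[
\widetilde{\mathcal{K}}=\mathcal{K}^{\min}\oplus R,\quad \widetilde{X}_2=X_2^{\min}\oplus U;\qquad \mathcal{K}_0=\mathcal{K}^{\min}\oplus R_0,\quad X_2=X_2^{\min}\oplus U_0,
\]
with $U,U_0$ co-isometries on $R,R_0$ respectively.

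Finally I would set $\mathcal{K}:=\mathcal{K}^{\min}\oplus R\oplus R_0$, so that $\widetilde{\mathcal{K}}$ embeds as the first two summands and $\mathcal{K}_0$ as the first and third. Define
\[
V_2:=X_2^{\min}\oplus U\oplus U_0,\qquad V_1:=X_1\oplus I_R,\qquad qV_q:=qX_q\oplus I_R,
\]
the latter two with respect to the decomposition $\mathcal{K}=\mathcal{K}_0\oplus R$. Each of $V_1,V_2,qV_q$ is a direct sum of co-isometries and hence a co-isometry; the restrictions give $V_2|_{\widetilde{\mathcal{K}}}=\widetilde{X}_2$, $V_1|_{\mathcal{H}}=T_1$, and $qV_q|_{\mathcal{H}}=qT_1$. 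Rewriting $V_2=X_2\oplus U$ with respect to $\mathcal{K}=\mathcal{K}_0\oplus R$, a short block computation yields $V_1V_2=X_1X_2\oplus U$ and $qV_2V_q=qX_2X_q\oplus U$, and these agree by $X_1X_2=qX_2X_q$. The main obstacle is the identification in the previous paragraph of the minimal parts of $\widetilde{X}_2$ and $X_2$ via uniqueness of the minimal co-isometric extension; once that is in hand the remainder of the argument is routine bookkeeping with orthogonal direct sums.
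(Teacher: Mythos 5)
Your proposal is correct and is essentially the paper's own argument: both apply Theorem \ref{qcoisoext}, split off the minimal co-isometric part of $\widetilde{X}_2$ and identify it via uniqueness of the minimal co-isometric extension (the paper writes $\widetilde{X}_2=X_0\oplus Y_2$ and observes that $Z_2$ extends $X_0$, while you additionally decompose the theorem's output $X_2$ — harmless extra bookkeeping), then pad $X_1$ and $qX_q$ by the identity on the leftover summand $R$ (the paper's $\mathcal{R}_2$). The only nit: your verification that $\mathcal{L}^{\min}$ is $Y$-invariant covers $n\geq 1$ but should also note the $n=0$ case, which is immediate since $Yh=Th\in\mathcal{H}$.
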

\begin{proof}
	Let $\widetilde{X}_2=X_0\oplus Y_2$ on $\widetilde{\mathcal{K}}=\widetilde{\mathcal{K}}_0\oplus \mathcal{R}_2$, where $X_0$ is the minimal co-isometric extension of $T_2$. By Theorem \ref{qcoisoext}, there exist co-isometric extensions $Z_1,Z_2,qZ_q$ of $T_1,T_2, qT_1$ respectively on some Hilbert space $\mathfrak{L}$ such that $Z_1Z_2 = qZ_2Z_q$. Let $\mathcal{K}= \mathfrak{L}\oplus \mathcal{R}_2$, $X_2=Z_2 \oplus Y_2, X_1= Z_1 \oplus I$ and $qX_q = qZ_q \oplus I$. Clearly, $X_1X_2 = qX_2X_q$, $X_1,X_2$, $qX_q $ are co-isometries and $X_1|_{\mathcal{H}}=T_1, qX_q|_{\mathcal{H}}=qT_1$. Since minimal co-isometric extension of a contraction is unique up to isomorphism and every co-isometric extension of a contraction contains a minimal co-isometric extension, $Z_2$ is a co-isometric extension of $X_0$. Therefore, $X_2$ is a co-isometric extension of $\widetilde{X}_2$. This completes the proof.
\end{proof}

\begin{lem}
	Let $T_1,T_2\in \mathcal{B}(\mathcal{H})$ be contractions such that $T_1 \neq 0$, $\|T_2\|<1 $ and $T_1T_2=qT_2T_1$ for some $0<|q|\leq \dfrac{1}{\|T_1\|}$. Suppose $\widetilde{X}_1$ on $\widetilde{\mathcal{K}}_1$ and $\widetilde{qX}_q$ on $\widetilde{\mathcal{K}}_2$ are co-isometric extensions of $T_1$ and $qT_1$ respectively. Then there exist co-isometric extensions $X_1,X_2$ and $qX_q$ of $\widetilde{X}_1,T_2$ and $\widetilde{qX}_q$ on a Hilbert space $\mathcal{K}\supseteq\widetilde{\mathcal{K}}$ respectively such that $X_1X_2 = qX_2X_q$.
\end{lem}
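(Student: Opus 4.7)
The plan is to imitate the construction used to prove the preceding lemma, but now with two prescribed co-isometric extensions to accommodate simultaneously. First I would apply a Wold-type decomposition to each given extension: write
\[
\widetilde{X}_1 = X_0^{(1)} \oplus Y_1 \ \text{on}\ \widetilde{\mathcal{K}}_1 = \widetilde{\mathcal{K}}_{0,1} \oplus \mathcal{R}_1,
\]
with $X_0^{(1)}$ the minimal co-isometric extension of $T_1$, and analogously
\[
\widetilde{qX}_q = qX_0^{(q)} \oplus (qY_q) \ \text{on}\ \widetilde{\mathcal{K}}_2 = \widetilde{\mathcal{K}}_{0,2} \oplus \mathcal{R}_q,
\]
with $qX_0^{(q)}$ the minimal co-isometric extension of $qT_1$. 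Both $Y_1$ and $qY_q$ are then co-isometries on their respective summands.

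Next I would invoke Theorem \ref{qcoisoext} applied to the triple $T_1, T_2, qT_1$ to obtain co-isometric extensions $Z_1, Z_2, qZ_q$ on a common Hilbert space $\mathfrak{L} \supseteq \mathcal{H}$ with $Z_1 Z_2 = q Z_2 Z_q$. Uniqueness (up to isomorphism) of the minimal co-isometric extension lets me identify $\widetilde{\mathcal{K}}_{0,1}$ and $\widetilde{\mathcal{K}}_{0,2}$ with $\bigvee_{n \geq 0} Z_1^{*n} \mathcal{H}$ and $\bigvee_{n \geq 0} (qZ_q)^{*n} \mathcal{H}$ inside $\mathfrak{L}$, under which $Z_1$ restricts to $X_0^{(1)}$ on the first and $qZ_q$ restricts to $qX_0^{(q)}$ on the second. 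With these identifications, both $\widetilde{\mathcal{K}}_1$ and $\widetilde{\mathcal{K}}_2$ sit naturally inside $\mathfrak{L} \oplus \mathcal{R}_1 \oplus \mathcal{R}_q$.

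Now set $\mathcal{K} := \mathfrak{L} \oplus \mathcal{R}_1 \oplus \mathcal{R}_q$ and define
\begin{gather*}
X_1 := Z_1 \oplus Y_1 \oplus (qY_q),\\
X_2 := Z_2 \oplus I_{\mathcal{R}_1} \oplus I_{\mathcal{R}_q},\\
qX_q := qZ_q \oplus Y_1 \oplus (qY_q).
\end{gather*}
Each is a direct sum of co-isometries and hence a co-isometry; $X_1$ and $qX_q$ extend $\widetilde{X}_1$ and $\widetilde{qX}_q$ respectively, while $X_2$ extends $T_2$. The identity $X_1 X_2 = q X_2 X_q$ then reduces blockwise to $Z_1 Z_2 = q Z_2 Z_q$ on $\mathfrak{L}$ (from Theorem \ref{qcoisoext}), to both sides equaling $Y_1$ on $\mathcal{R}_1$, and to both sides equaling $qY_q$ on $\mathcal{R}_q$.

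The subtle point I expect to be easiest to mis-handle is the block on $\mathcal{R}_q$: once $X_2$ is set to the identity there, the commutation relation forces $X_1|_{\mathcal{R}_q} = qY_q$, and this is a co-isometry precisely because it is the residual block of the prescribed co-isometry $\widetilde{qX}_q$ (the operator $Y_q$ itself fails to be a co-isometry when $|q| < 1$, so one must resist placing $Y_q$ there by symmetry with the $\mathcal{R}_1$ case). Keeping this bookkeeping straight is what makes the argument uniform over the full range $0 < |q| \leq 1/\|T_1\|$, not merely for $|q| = 1$.
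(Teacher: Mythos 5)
Your proposal is correct and follows essentially the same route as the paper's proof: the same splitting $\widetilde{X}_1=X_{10}\oplus Y_1$ and $\widetilde{qX}_q=X_{q0}\oplus Y_2$ off the minimal co-isometric extensions, the same appeal to Theorem \ref{qcoisoext} for $Z_1,Z_2,qZ_q$ on $\mathfrak{L}$, and the identical direct sums $X_1=Z_1\oplus Y_1\oplus Y_2$, $X_2=Z_2\oplus I\oplus I$, $qX_q=qZ_q\oplus Y_1\oplus Y_2$ (your $qY_q$ is just the paper's $Y_2$, and your $\mathcal{R}_q$ its $\mathcal{R}_2$). The only difference is expository: you spell out the uniqueness-based identification of the minimal parts inside $\mathfrak{L}$, which the paper leaves implicit.
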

\begin{proof}
	Suppose $\widetilde{X}_1=X_{10}\oplus Y_1$ on $\widetilde{\mathcal{K}}_1=K_{10}\oplus \mathcal{R}_1$ and $\widetilde{qX}_q =X_{q0}\oplus Y_2$ on $\widetilde{\mathcal{K}}_2=K_{20}\oplus \mathcal{R}_2$, where $X_{10}$ and $X_{q0}$ are the minimal co-isometric extensions of $T_1$ and $qT_1$ respectively. By Theorem \ref{qcoisoext}, there exist co-isometric extensions $Z_1,Z_2,qZ_q$ of $T_1,T_2, qT_1$ respectively on $\mathfrak{L}$ such that $Z_1Z_2 = qZ_2Z_q$. Let $\mathcal{K}= \mathfrak{L}\oplus \mathcal{R}_1\oplus \mathcal{R}_2$, $X_2=Z_2 \oplus I\oplus I, X_1= Z_1 \oplus Y_1\oplus Y_2$ and $qX_q = qZ_q \oplus Y_1 \oplus Y_2$. Clearly, $X_1X_2 = qX_2X_q$, $X_1,X_2$, $qX_q $ are co-isometries and $X_2|_{\mathcal{H}}=T_2$. Further $X_1$ and $qX_q$ are co-isometric extensions of $\widetilde{X}_1$ and $\widetilde{qX}_q$ respectively. This is because, $X_{10}$, $X_{q0}$ are the minimal co-isometric extensions of $T_1$, $qT_1$ respectively and also $\widetilde{X}_1,\widetilde{qX}_q$ are co-isometric extenstions of $T_1$, $qT_1$ respectively.
\end{proof}

We conclude this Section with a similar co-extension result for $q$-intertwining operators when $q$ is a complex number of modulus one. 

\begin{thm}\label{coisoextn}
	Suppose $A\in \mathcal{B}(\mathcal{H}_1,\mathcal{H}_2)$ is a strict contraction and $T_i\in \mathcal{B}(\mathcal{H}_i)$ is a contraction for $i = 1,2$ such that $AT_1=qT_2A$, where $|q|=1$. Then there exist co-isometric extensions $Y\in \mathcal{B}(\mathcal{K}_1, \mathcal{K}_2)$ of $A$ and $X_i\in\mathcal{B}(\mathcal{K}_i)$ of $T_i$ for $i=1,2$ such that $YX_1=qX_2Y$. Moreover, $Y$ is a pure co-isometry and if $T_i$ is a strict contraction then $X_i$ is pure co-isometry.
\end{thm}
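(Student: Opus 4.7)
My plan is to adapt the Schaffer-style block operator construction of Theorem \ref{qcoisoext} to the present asymmetric setting in which the source and target Hilbert spaces differ. The first step is to recast $AT_1=qT_2A$ as the classical intertwining relation $AT_1=(qT_2)A$, which is admissible because $|q|=1$ makes $qT_2$ a contraction. Applying the Sz.-Nagy--Foias intertwining lifting theorem (Theorem \ref{douglasintertwining}) produces minimal co-isometric extensions $V_i$ of $T_i$ on Hilbert spaces $\widetilde{\mathcal{K}}_i\supseteq\mathcal{H}_i$ (note that $qV_2$ is then the minimal co-isometric extension of $qT_2$, using $|q|=1$), together with a bounded operator $B:\widetilde{\mathcal{K}}_1\to\widetilde{\mathcal{K}}_2$ that extends $A$, satisfies $\|B\|=\|A\|<1$, and obeys $BV_1=qV_2 B$. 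Strictness of $A$ gives that $D_{B^*}$ is boundedly invertible on $\widetilde{\mathcal{K}}_2$.

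Next I would enlarge asymmetrically by setting $\mathcal{K}_1=\widetilde{\mathcal{K}}_1\oplus\bigoplus_{n\geq 1}\widetilde{\mathcal{K}}_2$ and $\mathcal{K}_2=\bigoplus_{n\geq 0}\widetilde{\mathcal{K}}_2$, and define
\[
Y=\begin{pmatrix}B & D_{B^*} & 0 & 0 & \cdots\\ 0 & 0 & I & 0 & \cdots\\ 0 & 0 & 0 & I & \cdots\\ \vdots & & & & \ddots\end{pmatrix},\qquad X_1=V_1\oplus\bigoplus_{n\geq 1}(qR),\qquad X_2=V_2\oplus\bigoplus_{n\geq 1}R,
\]
with $R:=D_{B^*}^{-1}V_2 D_{B^*}$. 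The matrix $Y$ is modelled on the block pure co-isometry $X_2$ that appears in the proof of Theorem \ref{qcoisoext}, while the diagonal structure of $X_1,X_2$ parallels the operators $X_1,qX_q$ there.

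The verifications should then be mechanical. From $BB^*+D_{B^*}^2=I$ one reads off $YY^*=I_{\mathcal{K}_2}$; the identity $V_2BB^*V_2^*=BB^*$ (a consequence of $qV_2B=BV_1$, $V_1V_1^*=I$ and $|q|=1$) yields $RR^*=I$, hence $X_iX_i^*=I_{\mathcal{K}_i}$, and $qR$ is also a co-isometry since $|q|=1$. The $q$-intertwining identity $YX_1=qX_2Y$ splits into a head relation equivalent to $BV_1=qV_2B$ and a tail relation equivalent to $qD_{B^*}R=qV_2D_{B^*}$, both of which are built into the construction. The extension claims follow from $V_i|_{\mathcal{H}_i}=T_i$ and $B|_{\mathcal{H}_1}=A$.

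For the pureness statement, the shift-like tail of $Y$ forces vectors into progressively deeper summands under iteration, while the identity $(R^*)^n=D_{B^*}V_2^{*n}D_{B^*}^{-1}$ together with boundedness of $D_{B^*}$ and $D_{B^*}^{-1}$ reduces pureness of $X_i$ to pureness of the Schaffer model $V_i$, which is available when $T_i$ is a strict contraction. The main obstacle I anticipate is the bookkeeping forced by the asymmetry: because the source and target of $Y$ live over different ambient spaces, one must set the head/tail decomposition so that the dilation space of $X_1$ is built with copies of $\widetilde{\mathcal{K}}_2$ (rather than $\widetilde{\mathcal{K}}_1$) in every tail slot, and the co-isometry checks rely critically on $|q|=1$ through the identity $V_2BB^*V_2^*=BB^*$. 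Once this structural choice is made, the scalar algebra mirrors that of Theorem \ref{qcoisoext}.
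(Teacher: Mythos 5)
Your proposal is correct, and its back end coincides with the paper's: once one has a strict contraction $B$ extending $A$ with $\|B\|=\|A\|<1$ and $BV_1=qV_2B$, the paper builds exactly your Schaffer-type blocks out of $D_{B^*}$ and $R=D_{B^*}^{-1}V_2D_{B^*}$, and your verifications ($YY^*=I$ from $BB^*+D_{B^*}^2=I$; $RR^*=I$ via $V_2BB^*V_2^*=BB^*$, which is indeed where $|q|=1$ enters; head relation $BV_1=qV_2B$ and tail relation $D_{B^*}R=V_2D_{B^*}$) all go through. Your redistribution of the scalar ($qR$ in every tail slot of $X_1$, no $\tfrac1q$ factors anywhere) is a harmless renormalization of the paper's choice, which instead puts $\tfrac{1}{q}D_{B^*}$ in $Y$, takes $X_1=\mathrm{diag}(V_1,qR,R,R,\dots)$, and puts $\tfrac1q R$ in the tail of $X_2$; both normalizations satisfy $YX_1=qX_2Y$. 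Where you genuinely diverge is the front end: the paper never invokes Theorem \ref{douglasintertwining} here, but instead inflates to $2\times 2$ blocks $\widetilde T=T_1\oplus T_2$ and $\widetilde A=\left(\begin{smallmatrix}0&0\\A&0\end{smallmatrix}\right)$ on $\mathcal H_1\oplus\mathcal H_2$, checks $\widetilde A\widetilde T=q\widetilde T\widetilde A$, applies its own $q$-co-extension result (Theorem \ref{qcomexminno}) to $\widetilde V=V_1\oplus V_2$, and extracts $B$ as the lower-left corner of the resulting $\widetilde Y$, recovering $\|B\|=\|A\|$ from $B|_{\mathcal H_1}=A$ together with $\|B\|\le\|\widetilde Y\|=\|A\|$. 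Your route --- absorbing $q$ into $qT_2$ (a contraction precisely because $|q|=1$) and applying the classical Sz.-Nagy--Foias intertwining lifting directly, noting that $qV_2$ is then the minimal co-isometric extension of $qT_2$ --- is shorter and avoids both the inflation and the corner extraction; what the paper's block trick buys is uniformity with its $q$-lifting machinery, which is designed for the regime $|q|\le 1/\|T\|$ where $q$ cannot simply be absorbed into a contraction. Two small points: Theorem \ref{douglasintertwining} is stated for minimal co-isometric extensions, so for the pureness claims you should either observe that the minimal co-isometric extension of a strict contraction is automatically pure (since $T_i^n\to 0$ in norm) or pass to non-minimal Schaffer models as in Corollary \ref{q-exten}; and since the tail of $X_1$ is built from $R$, hence from $V_2$, pureness of $X_1$ actually requires $V_2$ pure (so "$T_i$ strict $\Rightarrow X_i$ pure" for $i=1$ tacitly needs $T_2$ strict as well) --- but this caveat is equally present in the paper's "easily verified" construction, so it is not a gap of your argument relative to the paper's.
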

\begin{proof}
	Consider 
	\[
	\widetilde{T}=\begin{pmatrix}
		T_1 & 0\\
		0 & T_2
	\end{pmatrix},\quad \widetilde{A}=\begin{pmatrix}
		0 & 0\\
		A & 0
	\end{pmatrix} \text{ on }\mathcal{H}_1\oplus\mathcal{H}_2.
	\]
	Suppose $\widetilde{V}=\begin{pmatrix}
		V_1 & 0\\
		0 & V_2
	\end{pmatrix}$, where $V_i$ on $\ell^2(\mathcal{H}_i)(=\mathfrak{L}_i)$ is a co-isometric extension of $T_i$ for $i=1,2$. 
	Clearly $\widetilde{A}\widetilde{T}= q\widetilde{T}\widetilde{A}$ and $\widetilde{V}$ is a co-isometric extension of $\widetilde{T}$ on $K=\mathfrak{L}_1\oplus \mathfrak{L}_2$. By Corollary \ref{qcomexminno}, there exists an operator $\widetilde{Y}$ in $\mathcal{B}(K)$ such that 
	\[
	\widetilde{Y}\widetilde{V}=q\widetilde{V}\widetilde{Y}, \quad \widetilde{Y}|_{\mathcal{H}_1\oplus\mathcal{H}_2}= \widetilde{A}, \quad \|\widetilde{Y}\|=\|\widetilde{A}\|=\|A\|.
	\]
	The block matrix of $\widetilde{Y}$ with respect to $\mathfrak{L}_1 \oplus \mathfrak{L}_2$ is $\begin{pmatrix}
		Z_1 & Z_2\\
		B & Z_3
	\end{pmatrix}$. Hence we have $BV_1=qV_2B$. Now clearly $\|B\|\leq \|\widetilde{Y}\|=\|A\|$. Further $\widetilde{Y}|_{\mathcal{H}_1\oplus\mathcal{H}_2}= \widetilde{A}$ implies that $B|_{\mathcal{H}_1}=A$. Therefore, $\|A\|\leq \|B\|$ and hence $\|B\|=\|A\|<1$. Since $\|B\|$ is less than $1$, $D_{B^*}$ is invertible. Consider the following operators  
	\begingroup
	\allowdisplaybreaks
	\begin{align*}
		& Y = \begin{pmatrix}
			B & \dfrac{1}{q}D_{B^*}& 0 & 0 & 0 & \cdots\\
			0 & 0 & I & 0 & 0 & \cdots\\
			0 & 0 & 0 & I & 0 & \cdots\\
			0 & 0 & 0 & 0 & I & \cdots\\
			\vdots & \vdots & \vdots & \vdots & \vdots & \ddots
		\end{pmatrix} : \mathfrak{L}_1 \oplus \ell^2(\mathfrak{L}_2) \to \mathfrak{L}_2 \oplus \ell^2(\mathfrak{L}_2),\\
		& X_1 = \begin{pmatrix}
			V_1 & 0 & 0 & 0 &\cdots\\
			0 & qD_{B^*}^{-1}V_2D_{B^*} & 0 & 0 & \cdots\\
			0 & 0 & D_{B^*}^{-1}V_2D_{B^*} & 0 & \cdots\\
			0 & 0 & 0 & D_{B^*}^{-1}V_2D_{B^*} &  \cdots\\
			\vdots & \vdots & \vdots & \vdots & \ddots
		\end{pmatrix} \text{ on } \mathfrak{L}_1 \oplus \ell^2(\mathfrak{L}_2), \\
		& X_2 = \begin{pmatrix}
			V_2 & 0 & 0 & 0 & \cdots\\
			0 & \dfrac{1}{q}D_{B^*}^{-1}V_2D_{B^*} & 0 & 0 & \cdots\\
			0 & 0 & \dfrac{1}{q}D_{B^*}^{-1}V_2D_{B^*} & 0 &  \cdots\\
			0 & 0 & 0 & \dfrac{1}{q}D_{B^*}^{-1}V_2D_{B^*} &  \cdots\\
			\vdots & \vdots & \vdots & \vdots & \ddots
		\end{pmatrix} \text{ on } \mathfrak{L}_2 \oplus \ell^2(\mathfrak{L}_2). 
	\end{align*} 
	\endgroup
	It can be easily verified that $X_1,X_2$ and $Y$ satisfy the required conditions. The proof is complete.
	
\end{proof}

\section{$q$-commutant lifting}

\vspace{0.4cm}

\noindent In the previous Section, we assumed the existence of co-isometric extension of a contraction and established a few $q$-commutant lifting results. Here we study the existence of a $q$-commutant of the minimal isometric lift of a contraction. We begin with a few preparatory results.     
\begin{prop}\label{Dgenprop}
	Let $T_1\in \mathcal{B}(\mathcal{H}_1,\mathcal{H}_1')$ and $T_2\in \mathcal{B}(\mathcal{H}_2,\mathcal{H}_2')$ be contractions. Suppose $X$ is an operator from $\mathcal{H}_1$ to $\mathcal{H}_2'$. Then \[
	Y = \begin{pmatrix}
	T_1 & 0\\
	X & T_2
	\end{pmatrix}:\mathcal{H}_1\oplus \mathcal{H}_2 \to \mathcal{H}_1'\oplus \mathcal{H}_2'\]
	is a contraction if and only if $X = D_{T_2^*}CD_{T_1}$ for some contraction $C : \mathcal{H}_1 \to \mathcal{H}_2'$.
\end{prop}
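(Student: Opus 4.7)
The plan is to reduce the contractivity of $Y$ to a pair of applications of Douglas' factorization lemma (Lemma \ref{Dlemma}) by rewriting $\|Y(h_1,h_2)\|^2$ and isolating a suitable factorable piece.

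First I would handle the easier (sufficiency) direction. Assuming $X=D_{T_2^*}CD_{T_1}$ with $\|C\|\le 1$, I would simply expand
\[
\|Y(h_1,h_2)\|^2=\|T_1h_1\|^2+\|D_{T_2^*}CD_{T_1}h_1+T_2h_2\|^2.
\]
The inner bracket is the value of the row operator $(D_{T_2^*}C,\ T_2):\mathcal{H}_1\oplus\mathcal{H}_2\to\mathcal{H}_2'$ at $(D_{T_1}h_1,h_2)$, and
\[
(D_{T_2^*}C)(D_{T_2^*}C)^*+T_2T_2^*=D_{T_2^*}CC^*D_{T_2^*}+T_2T_2^*\le D_{T_2^*}^2+T_2T_2^*=I,
\]
so this row operator is a contraction. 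Combining with $\|T_1h_1\|^2+\|D_{T_1}h_1\|^2=\|h_1\|^2$ finishes this direction.

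For the harder (necessity) direction, assume $\|Y\|\le 1$. Expanding $\|Yh\|^2\le\|h\|^2$ and rearranging yields
\[
\|Xh_1+T_2h_2\|^2\le\|D_{T_1}h_1\|^2+\|h_2\|^2 \qquad \text{for all }h_1\in\mathcal{H}_1,\ h_2\in\mathcal{H}_2.
\]
Define $G:\mathcal{H}_1\oplus\mathcal{H}_2\to\mathcal{H}_2'$ by $G(h_1,h_2)=Xh_1+T_2h_2$ and $\Delta:\mathcal{H}_1\oplus\mathcal{H}_2\to\mathcal{H}_1\oplus\mathcal{H}_2$ by $\Delta(h_1,h_2)=(D_{T_1}h_1,h_2)$. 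The inequality then reads $\|Gh\|\le\|\Delta h\|$, so the assignment $\Delta h\mapsto Gh$ is a well-defined contraction on $\overline{\mathrm{Ran}\,\Delta}$, which extends (by $0$ on the orthogonal complement) to a contraction $F=(F_1,F_2):\mathcal{H}_1\oplus\mathcal{H}_2\to\mathcal{H}_2'$ with $G=F\Delta$. Matching components forces $F_2=T_2$ and $F_1D_{T_1}=X$.

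The final step is to factor $F_1$. Since $F$ is a contraction, $F_1F_1^*+T_2T_2^*=FF^*\le I$, i.e.\ $F_1F_1^*\le D_{T_2^*}^2$. An application of Lemma \ref{Dlemma} (Douglas) to $F_1$ and $D_{T_2^*}$ produces a contraction $C:\mathcal{H}_1\to\mathcal{H}_2'$ with $F_1=D_{T_2^*}C$, whence $X=F_1D_{T_1}=D_{T_2^*}CD_{T_1}$, completing the proof. The one subtle point is the contractive extension step: one must justify that the range of $\Delta$ is dense in $\mathcal{H}_1\oplus\mathcal{H}_2$ only up to the operator being extended by zero, so the intermediate contraction $F$ is genuinely defined on the whole of $\mathcal{H}_1\oplus\mathcal{H}_2$; everything else is a clean two-step application of Douglas.
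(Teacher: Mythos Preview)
Your proof is correct and follows essentially the same two-step Douglas factorization as the paper: first peel off a factor of $D_{T_1}$ on the right, then a factor of $D_{T_2^*}$ on the left. The only cosmetic difference is that the paper runs the whole argument as a chain of ``if and only if'' equivalences at the operator-matrix level (applying Lemma~\ref{Dlemma} to $\begin{pmatrix}0&0\\ X&T_2\end{pmatrix}^*\begin{pmatrix}0&0\\ X&T_2\end{pmatrix}\le\begin{pmatrix}D_{T_1}^2&0\\0&I\end{pmatrix}$ and then again to $Z_{21}Z_{21}^*\le D_{T_2^*}^2$), whereas you treat sufficiency and necessity separately and construct the first factor $F$ by hand from the norm inequality $\|Gh\|\le\|\Delta h\|$ rather than by quoting Douglas; your extension-by-zero remark is exactly what makes that hand construction rigorous.
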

\begin{proof}

This is a variant of Proposition 2.2 in \cite{Douglas} and we implement a similar idea to prove it.
	The operator $Y = \begin{pmatrix}
	T_1 & 0\\
	X & T_2
	\end{pmatrix}$ is a contraction if and only if 
	$
	Y^*Y \leq \begin{pmatrix}
	I_{\mathcal{H}_1} & 0\\
	0 & I_{\mathcal{H}_2}
	\end{pmatrix},
	$
	that is, if and only if 
	\begingroup
	\allowdisplaybreaks
	\begin{align}\label{eqmatrix}
	&\left(
	\begin{pmatrix}
	T_1 & 0\\
	0 & 0
	\end{pmatrix} + \begin{pmatrix}
	0 & 0\\
	X & T_2
	\end{pmatrix}
	\right)^*\left( \begin{pmatrix}
	T_1 & 0\\
	0 & 0
	\end{pmatrix} + \begin{pmatrix}
	0 & 0\\
	X & T_2
	\end{pmatrix} \right) \leq \begin{pmatrix}
	I_{\mathcal{H}_1} & 0\\
	0 & I_{\mathcal{H}_2}
	\end{pmatrix}\notag\\
	\iff & \begin{pmatrix}
	T_1^* & 0\\
	0 & 0
	\end{pmatrix}\begin{pmatrix}
	T_1 & 0\\
	0 & 0
	\end{pmatrix} + \begin{pmatrix}
	0 & 0\\
	X & T_2
	\end{pmatrix}^*\begin{pmatrix}
	0 & 0\\
	X & T_2
	\end{pmatrix} \leq \begin{pmatrix}
	I_{\mathcal{H}_1} & 0\\
	0 & I_{\mathcal{H}_2}
	\end{pmatrix}\notag\\
	\iff & \begin{pmatrix}
	T_1^*T_1 & 0\\
	0 & 0
	\end{pmatrix}+ \begin{pmatrix}
	0 & 0\\
	X & T_2
	\end{pmatrix}^*\begin{pmatrix}
	0 & 0\\
	X & T_2
	\end{pmatrix} \leq \begin{pmatrix}
	I_{\mathcal{H}_1} & 0\\
	0 & I_{\mathcal{H}_2}
	\end{pmatrix}\notag\\
	\iff & \begin{pmatrix}
	0 & 0\\
	X & T_2
	\end{pmatrix}^*\begin{pmatrix}
	0 & 0\\
	X & T_2
	\end{pmatrix} \leq \begin{pmatrix}
	I_{\mathcal{H}_1}-T_1^*T_1 & 0\\
	0 & I_{\mathcal{H}_2}
	\end{pmatrix}.
	\end{align}
	\endgroup
	By Lemma \ref{Dlemma}, Equation \eqref{eqmatrix} holds if and only if there exists a contraction \[Z = \begin{pmatrix}
	Z_{11} & Z_{12}\\
	Z_{21} & Z_{22}
	\end{pmatrix}: \mathcal{H}_1 \oplus \mathcal{H}_2 \to  \mathcal{H}_1' \oplus \mathcal{H}_2'
	\]
	such that 
		\begin{equation}\label{eqmatrix1}
	\begin{pmatrix}
	0 & 0\\
	X & T_2
	\end{pmatrix}^* = \begin{pmatrix}
	D_{T_1} & 0\\
	0 & I_{\mathcal{H}_2}
	\end{pmatrix}\begin{pmatrix}
	Z_{11} & Z_{12}\\
	Z_{21} & Z_{22}
	\end{pmatrix}^* = \begin{pmatrix}
	D_{T_1}Z_{11}^* & D_{T_1}Z_{21}^*\\
	Z_{12}^* & Z_{22}^*
	\end{pmatrix}.
	\end{equation}
	Equation \eqref{eqmatrix1} holds if and only if there exists a contraction $Z$ such that $Z_{12} =0$, $Z_{22}=T_2$, $X=Z_{21}D_{T_1}$ and $D_{T_1}Z_{11}^*=0$. This is equivalent to the existence of a contraction $Z' =\begin{pmatrix}
	0 & 0\\
	Z_{21} & T_2
	\end{pmatrix}$ satisfying $X = Z_{21}D_{T_1}$. Now $Z'$ is a contraction if and only if $Z'Z'^*\leq I_{\mathcal{H}_1'\oplus \mathcal{H}_2'}$ if and only if $Z_{21}Z_{21}^*+T_2T_2^*\leq I_{\mathcal{H}_2'}$, that is, $Z_{21}Z_{21}^*\leq I_{\mathcal{H}_2'}-T_2T_2^*$. Thus, we have that $Y = \begin{pmatrix}
	T_1 & 0\\
	X & T_2
	\end{pmatrix}$ is a contraction if and only if there exists \[Z' =\begin{pmatrix}
	0 & 0\\
	Z_{21} & T_2
	\end{pmatrix}:\mathcal{H}_1\oplus \mathcal{H}_2\to \mathcal{H}_1'\oplus \mathcal{H}_2'\] satisfying $Z_{21}Z_{21}^*\leq I_{\mathcal{H}_2'}-T_2T_2^*$ and $X = Z_{21}D_{T_1}$. 
	By Lemma \ref{Dlemma}, $Z_{21}Z_{21}^*\leq I_{\mathcal{H}_2'}-T_2T_2^*$ holds if and only if there exists a contraction $C:\mathcal{H}_1\to \mathcal{H}_2'$ such that $Z_{21}=D_{T_2^*}C$. Therefore, $Y$ is a contraction if and only if $X=D_{T_2^*}CD_{T_1}$ for some contraction $C$ and the proof is complete. 	
\end{proof}

Following the same technique as in the proof of Theorem 3 in \cite{Douglas}, we obtain a generalized version of the above result for intertwining operators.

\begin{thm}\label{qpart}
	Let $T\in\mathcal{B}(\mathcal{H}_1)$, $T'\in\mathcal{B}(\mathcal{H}_1')$ and $T_2\in\mathcal{B}(\mathcal{H}_1,\mathcal{H}_1')$ be contractions such that $T_2T=T'T_2$. Suppose $V=\begin{pmatrix}
	T&0\\
	S&0
	\end{pmatrix}$ on $\mathcal{H}_1\oplus \mathcal{H}_2$, $V'=\begin{pmatrix}
	T'&0\\
	S'&0
	\end{pmatrix}$ on $\mathcal{H}_1'\oplus \mathcal{H}_2'$ such that $T^*T+S^*S=I_{\mathcal{H}_1}$ and $T'^*T'+S'^*S'=I_{\mathcal{H}_1'}$. Then there exists $Y=\begin{pmatrix}
	T_2&0\\
	A&B
	\end{pmatrix}$ from $\mathcal{H}_1\oplus \mathcal{H}_2$ to $\mathcal{H}_1'\oplus \mathcal{H}_2'$ such that $YV=V'Y$ and $\|Y\|=\|T_2\|$.  
\end{thm}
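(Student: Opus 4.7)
Expanding the block products, $YV = V'Y$ is equivalent to $T_2 T = T' T_2$ (which holds by hypothesis) together with the single relation
\[
AT + BS = S' T_2.
\]
The inequality $\|Y\| \geq \|T_2\|$ is automatic (restricting $Y$ to the first column $\binom{T_2}{A}$ and projecting onto the first factor of the codomain), so the content of the theorem is to solve this equation while achieving $\|Y\| \leq \|T_2\|$. After disposing of the trivial case $T_2 = 0$ with $A = B = 0$, I would normalize so that $\|T_2\| = 1$ and aim to make $Y$ a contraction.

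By Proposition \ref{Dgenprop} (applied with $T_2$ and $B$ in the roles of $T_1$ and $T_2$ there) together with Lemma \ref{Dlemma}, $Y$ is a contraction if and only if $B$ is a contraction and $A$ factors as $A = C_1 D_{T_2}$ for some $C_1 : \mathcal{H}_1 \to \mathcal{H}_2'$ with $C_1 C_1^* + B B^* \leq I$. Substituting this into the equation $AT + BS = S'T_2$, the problem reduces to producing operators $C_1$ and $B$ satisfying
\[
C_1 (D_{T_2} T) + B S = S' T_2 \quad \text{and} \quad C_1 C_1^* + B B^* \leq I.
\]
Taking adjoints and applying Theorem \ref{Dthm1} with $A_1 = T^* D_{T_2}$, $A_2 = S^*$, $A_0 = T_2^* S'^*$, and unknowns $Z_1 = C_1^*$, $Z_2 = B^*$, the existence of such $C_1, B$ is in turn equivalent to the single operator inequality
\[
T^* D_{T_2}^2 T + S^* S \;\geq\; T_2^* S'^* S' T_2.
\]

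The only nontrivial step, and the main obstacle I anticipate, is the verification of this inequality. Using $T^*T + S^*S = I$, the left side telescopes to $I - T^* T_2^* T_2 T$. For the right side, the identity $S'^* S' = I - T'^* T'$ combined with the intertwining relation $T' T_2 = T_2 T$ gives
\[
T_2^* T'^* T' T_2 = (T' T_2)^* (T' T_2) = (T_2 T)^* (T_2 T) = T^* T_2^* T_2 T,
\]
so $T_2^* S'^* S' T_2 = T_2^* T_2 - T^* T_2^* T_2 T$. The inequality therefore collapses to $I \geq T_2^* T_2$, which holds because $T_2$ is a contraction. Theorem \ref{Dthm1} then produces the required $C_1$ and $B$, and setting $A := C_1 D_{T_2}$ gives the desired $Y$ with $YV = V'Y$ and $\|Y\| \leq 1$. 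Combined with the trivial lower bound and reversing the normalization, this yields $\|Y\| = \|T_2\|$.
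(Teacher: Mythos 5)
Your proposal is correct and follows essentially the same route as the paper's proof: reduce $YV=V'Y$ to the single equation $AT+BS=S'T_2$, characterize contractivity of $Y$ via Proposition \ref{Dgenprop} with the factorization $A=C_1D_{T_2}$, solve the adjoint equation by Theorem \ref{Dthm1} under the constraint $C_1C_1^*+BB^*\leq I$, and verify the key inequality $T^*D_{T_2}^2T+S^*S\geq T_2^*S'^*S'T_2$ using $T_2T=T'T_2$. The only (cosmetic) difference is that you simplify both sides exactly so the inequality visibly collapses to $I\geq T_2^*T_2$, whereas the paper reaches the same conclusion through a chain with an intermediate inequality step subtracting $I-T_2^*T_2\geq 0$.
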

\begin{proof}
	Without loss of generality assume $\|T_2\|=1$. We want to find a matrix $Y=\begin{pmatrix}
	T_2&0\\A&B
	\end{pmatrix}$ from $\mathcal{H}_1\oplus \mathcal{H}_2$ to $\mathcal{H}_1'\oplus \mathcal{H}_2'$ such that $YV=V'Y$, that is, to find $A$ and $B$ such that \begin{equation}\label{qgen1}
	AT+BS=S'T_2.
	\end{equation} For such a $Y$ to be contraction, due to Proposition \ref{Dgenprop} it suffices to show that $A$, $B$ are contractions and \begin{equation}\label{qgen2}
	A=D_{B^*}CD_{T_2}
	\end{equation} for some contraction $C\in \mathcal{B}(\mathcal{H}_1,\mathcal{H}_2')$. We first construct $K$ from $\mathcal{H}_1$ to $\mathcal{H}_2'$ such that $A=KD_{T_2}$. Thus \eqref{qgen1} gives us 
	\begin{equation*} KD_{T_2}T+BS=S'T_2. \end{equation*} This implies 
	\begin{equation}\label{qgen3} T^*D_{T_2}K^*+S^*B^*=T_2^*S'^*. \end{equation} In order to find $K^*$ and $B^*$ satisfying \eqref{qgen3}, due to Theorem \ref{Dthm1}, it suffices to show that $$(T^*D_{T_2})(T^*D_{T_2})^*+S^*S\geq T_2^*S'^*S'T_2.$$ 
	Since $T_2^*T_2\leq I_{\mathcal{H}_1}$, $T^*T+S^*S=I_{\mathcal{H}_1}$ and $T'^*T'+S'^*S'=I_{\mathcal{H}_1'}$, we have 
	\begingroup
	\allowdisplaybreaks
	\begin{align*}
	(T^*D_{T_2})(T^*D_{T_2})^*+S^*S&=T^*(I-T_2^*T_2)T+S^*S\\
	&\geq T^*(I-T_2^*T_2)T+S^*S-(I_{\mathcal{H}_1}-T_2^*T_2)\\
	& = T_2^*T_2 - T^*T_2^*T_2T\\
	& = T_2^*T_2 - T_2^*T'^*T'T_2\\
	& = T_2^*(I_{\mathcal{H}_1'}-T'^*T')T_2\\
	& = T_2^*S'^*S'T_2.
	\end{align*}
	\endgroup
	Therefore, by Theorem \ref{Dthm1}, there exist operators $K:  \mathcal{H}_1 \to \mathcal{H}_2'$ and $B : \mathcal{H}_2 \to \mathcal{H}_2'$ satisfying \eqref{qgen3} and $KK^* + BB^*\leq I_{\mathcal{H}_2'}$. This implies that $B$ is a contraction and $KK^* \leq I_{\mathcal{H}_2'} - BB^*$. Again by applying Lemma \ref{Dlemma}, there exists a contraction $C$ from $\mathcal{H}_1$ to $\mathcal{H}_2'$ such that $K = D_{B^*}C$. Clearly $\|Y\|\geq \|T_2\|=1$. Since $T_2, B$ are contractions and $A=D_{B^*}CD_{T_2}$ for some contraction $C$, it follows from Proposition \ref{Dgenprop} that $\|Y\|\leq 1$.
	This completes the proof.
\end{proof}

Suppose $T$ is a contraction on $\mathcal{H}$ and $(V,\mathcal{K})$ is an isometric lift of $T$. Let $\mathcal{K}_1=\bigvee\limits_{n=0}^{\infty}V^n\mathcal{H}$ and $K_n=\bigvee\limits_{l=0}^nV^l\mathcal{H}$. Then $(V|_{\mathcal{K}_1}, \mathcal{K}_1)$ is the minimal isometric lift of $T$. Again, any two minimal isometric dilations of $T$ are unitarily equivalent. Thus, without loss of generality we may consider the Schaeffer's minimal isometric dilation $(V,\mathcal{K})$ of $T$, where $ \mathcal{K}=\mathcal{H}\oplus \mathcal{D}_{T}\oplus \mathcal{D}_{T}\oplus \cdots  $ and
\[
 V=\begin{pmatrix}
T&0&0&0&\cdots\\
D_T&0&0&0&\cdots\\
0&I_{\mathcal{D}_{T}}&0&0&\cdots\\
0&0&I_{\mathcal{D}_{T}}&0&\cdots\\
\vdots&\vdots&\vdots&\vdots&\ddots
\end{pmatrix}.
\]   
The following result is an analogue of Theorem \ref{Dmain} while we consider the minimal isometric lift of a contraction instead of a co-isometric extension. This is another main result of this paper.

\begin{thm}\label{main}
	Let $T_1$ be a nonzero contraction and $T_2$ be an operator on a Hilbert space $\mathcal{H}$. Suppose $T_1T_2=qT_2T_1$, where $q$ is a complex number satisfying $0 <|q|\leq \dfrac{1}{\|T_1\|}$. Let $V$ on $\mathcal{K}'$ be the minimal isometric dilation of $T_1$ and $V_q$ be an operator on $\mathcal{K}$ such that $qV_q$ is the minimal isometric dilation of $qT_1$. Then there exists a bounded linear operator $W:\mathcal{K} \rightarrow \mathcal{K}'$ such that 
	\begin{itemize}
		\item[(1)] $\mathcal{H}$ is invariant under $W^*$ and $W^*|_{\mathcal{H}}=T_2^*$;
		\item[(2)] $\|W\|=\|T_2\|$ and
		\item[(3)] $VW = qWV_q$.
	\end{itemize}
\end{thm}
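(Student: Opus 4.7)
The plan is to reduce the statement to the classical commutant lifting theorem (Theorem \ref{douglasintertwining}) via passage to adjoints. Taking adjoints of $T_1 T_2 = q T_2 T_1$ gives
\[
T_2^*\, T_1^* \;=\; \bar q\, T_1^* T_2^* \;=\; (\bar q\, T_1^*)\, T_2^*,
\]
which exhibits $T_2^*$ as an intertwiner between the two contractions $T_1^*$ and $\bar q\, T_1^*$ on $\mathcal H$. Here $\bar q\, T_1^*$ is a contraction precisely because $|q|\,\|T_1\|\le 1$ by the hypothesis $|q|\le 1/\|T_1\|$; this is the one place where the unusual norm restriction on $q$ is used.

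Next I would invoke the standard duality between minimal isometric dilations and minimal co-isometric extensions (transparent from the Schaeffer model, for instance): since $V$ on $\mathcal K'$ is the minimal isometric dilation of $T_1$, the operator $V^*$ on $\mathcal K'$ is the minimal co-isometric extension of $T_1^*$; similarly $(qV_q)^* = \bar q\, V_q^*$ on $\mathcal K$ is the minimal co-isometric extension of $(qT_1)^* = \bar q\, T_1^*$.

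I would then apply Theorem \ref{douglasintertwining} to the pair of contractions $T_1^*$ and $\bar q\, T_1^*$ (both on $\mathcal H$), with their minimal co-isometric extensions $V^*$ and $\bar q\, V_q^*$, and with intertwiner $T_2^*$. This produces an operator $Y:\mathcal K' \to \mathcal K$ satisfying
\[
Y\, V^* = \bar q\, V_q^*\, Y, \qquad Y(\mathcal H)\subseteq \mathcal H, \qquad Y|_{\mathcal H}=T_2^*, \qquad \|Y\|=\|T_2^*\|=\|T_2\|.
\]
Setting $W := Y^*:\mathcal K \to \mathcal K'$ and taking adjoints of $Y V^* = \bar q\, V_q^* Y$ yields $V W = q W V_q$, giving (3). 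The conditions $Y(\mathcal H)\subseteq \mathcal H$ and $Y|_{\mathcal H}=T_2^*$ translate, under the identification $W^*=Y$, to $\mathcal H$ being invariant under $W^*$ with $W^*|_{\mathcal H}=T_2^*$, which is (1); and $\|W\|=\|Y\|=\|T_2\|$ is (2).

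The principal conceptual step, and the only one requiring the norm restriction on $q$, is the recognition that dualizing converts the $q$-commutation into an ordinary intertwining relation between two honest contractions, at the cost of needing $\bar q\, T_1^*$ to be a contraction. An alternative, more hands-on route would mimic the inductive dual-Parrott construction of Theorem \ref{Dmain} using the increasing subspaces $\bigvee_{\ell=0}^{n} V_q^\ell\,\mathcal H\subseteq \mathcal K$ and $\bigvee_{m=0}^{n} V^m\,\mathcal H\subseteq \mathcal K'$, building $W_n:\bigvee_{\ell=0}^{n} V_q^\ell\mathcal H\to\bigvee_{m=0}^{n} V^m\mathcal H$ step by step and taking a strong-operator limit; but the adjoint reduction above is considerably shorter and makes the role of the norm hypothesis on $q$ transparent.
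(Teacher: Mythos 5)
Your proof is correct, but it follows a genuinely different route from the paper's. The paper proves Theorem \ref{main} constructively: it realizes $\mathcal{K}'$ and $\mathcal{K}$ as the Schaeffer spaces $\mathcal{H}\oplus\mathcal{D}_{T_1}\oplus\mathcal{D}_{T_1}\oplus\cdots$ and $\mathcal{H}\oplus\mathcal{D}_{qT_1}\oplus\mathcal{D}_{qT_1}\oplus\cdots$, truncates $V$ and $qV_q$ to the nested co-invariant subspaces $\mathcal{K}_n'$ and $\mathcal{K}_n$, builds intertwiners $W_n:\mathcal{K}_n\to\mathcal{K}_n'$ inductively via Theorem \ref{qpart} --- a block-matrix completion result established from Proposition \ref{Dgenprop} and the Douglas-type factorization results (Lemma \ref{Dlemma}, Theorem \ref{Dthm1}) --- and finally passes to a strong operator limit. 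You instead dualize: the adjointed relation $T_2^*T_1^*=(\bar q\,T_1^*)T_2^*$ is an ordinary intertwining between the two contractions $T_1^*$ and $\bar q\,T_1^*$ (contractivity of the latter being precisely the hypothesis $|q|\leq 1/\|T_1\|$), the adjoints $V^*$ and $\bar q\,V_q^*=(qV_q)^*$ are the minimal co-isometric extensions of $T_1^*$ and $\bar q\,T_1^*$ (a duality the paper itself asserts in the introduction and exploits in Lemma \ref{comm.lift}), and Theorem \ref{douglasintertwining} then hands you $Y$ with $YV^*=\bar q\,V_q^*Y$, $Y\mathcal{H}\subseteq\mathcal{H}$, $Y|_{\mathcal{H}}=T_2^*$, $\|Y\|=\|T_2\|$; taking $W=Y^*$ yields all three conclusions, and since Theorem \ref{douglasintertwining} places no norm restriction on the intertwiner, the fact that $T_2$ is merely bounded causes no difficulty. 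All steps check out. Your argument is considerably shorter and makes the role of the hypothesis $|q|\leq 1/\|T_1\|$ transparent; it is also squarely in the spirit of the paper, which performs exactly this kind of adjoint reduction in the unnamed theorem following Theorem \ref{main} and again via Lemma \ref{comm.lift} in Theorem \ref{main112} --- indeed, the same observation (that $TX=qXT$ reads $X(qT)=TX$, an intertwining of the contractions $qT$ and $T$) shows Theorem \ref{Dmain} too is an immediate corollary of Theorem \ref{douglasintertwining}. What the paper's longer route buys is independence from the Sz.-Nagy--Foias lifting theorem as a black box: it gives an explicit, self-contained inductive construction of $W$ on the Schaeffer model, at the cost of the intermediate machinery of Proposition \ref{Dgenprop} and Theorem \ref{qpart}.
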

\begin{proof}
	Since $\mathcal{K}'$ and $\mathcal{K}$ are the minimal isometric dilation spaces for $T_1$ and $qT_1$ respectively, without loss of generality we may assume that
	\[
	\mathcal{K}' =  \mathcal{H}\oplus \mathcal D_{T_1}\oplus \mathcal D_{T_1}\oplus \dots
	\;\; \text{	and } \; \;	\mathcal{K}=  \mathcal{H}\oplus \mathcal D_{qT_1}\oplus \mathcal D_{qT_1}\oplus \dots \; \;.
	\]
	Consider for all $n\in \mathbb{N}\cup \{0 \}$,
	\[ \mathcal{K}_n'=\mathcal{H}\oplus\underbrace{\mathcal{D}_{T_1}\oplus\cdots \oplus\mathcal{D}_{T_1}}_{\text{n copies}}\oplus \{ 0\}\oplus \dots \;\; \text{ and } \;\;
 \mathcal{K}_n=\mathcal{H}\oplus\underbrace{\mathcal{D}_{qT_1}\oplus\cdots \oplus\mathcal{D}_{qT_1}}_{\text{n copies}}\oplus \{ 0\}\oplus \dots \;\;.
 \] 
	It can be easily verified that $\mathcal{K}_n$ and $\mathcal{K}_n'$ are invariant subspaces for $(qV_q)^*$ and $V^*$ respectively. Hence define $V_n^*=V^*|_{\mathcal{K}_n'}$ and $(qV_q)_n^*=(qV_q)^*|_{\mathcal{K}_n}$.
	Since $\mathcal{K}_n'$ is an invariant subspace for $V_{n+1}^*$, with respect to the decomposition $\mathcal{K}_{n+1}'=\mathcal{K}_n'\oplus \mathcal{D}_{T_1}$ the operator $V_{n+1}$ has the block matrix form $V_{n+1}=\begin{pmatrix}
	V_n&0\\
	S_n'&0
	\end{pmatrix}$. Similarly, $\mathcal{K}_n$ is an invariant subspace for $(qV_q)_{n+1}^*$ hence, with respect to the decomposition $\mathcal{K}_{n+1}=\mathcal{K}_n\oplus \mathcal{D}_{qT_1}$ the operator $(qV_q)_{n+1}$ has the block matrix form $(qV_q)_{n+1}=\begin{pmatrix}
	(qV_q)_n&0\\
	S_n&0
	\end{pmatrix}$. Note that $V_n^*V_n+S_n'^*S_n'=I_{\mathcal{K}_n'}$ and $(qV_q)_n^*(qV_q)_n+S_n^*S_n=I_{\mathcal{K}_n}$. Now we find a sequence $\{W_n\}$ inductively using Theorem $\ref{qpart}$. We have $V_1=\begin{pmatrix}
	T_1&0\\
	D_{T_1}&0
	\end{pmatrix}$ and $(qV_q)_1=\begin{pmatrix}
	qT_1&0\\
	D_{qT_1}&0
	\end{pmatrix}$ 
	satisfying $T_2(qT_1)=T_1T_2$, $T_1^*T_1+D_{T_1}^2=I_{\mathcal{H}}$ and $(qT_1)^*(qT_1)+D_{qT_1}^2=I_{\mathcal{H}}$. Therefore, by Theorem \ref{qpart}, there exists $W_1=\begin{pmatrix}
	T_2&0\\
	A_1&B_1
	\end{pmatrix}$ from $\mathcal{K}_1$ to $\mathcal{K}_1'$ such that $W_1(qV_q)_1=V_1W_1$, $W_1^*|_{\mathcal{K}_0'}=T_2^*$ and $\|W_1\|=\|T_2\|$.
	
	Assume that for $1\leq m \leq n-1$, there exists $W_{m}=\begin{pmatrix}
	W_{m-1}&0\\
	A_{m}&B_{m}
	\end{pmatrix}$ from $\mathcal{K}_{m}$ to $\mathcal{K}_{m}'$ such that $W_{m}(qV_q)_{m}=V_{m}W_{m}$, $W_{m}^*|_{\mathcal{K}_{m-1}'}=W_{m-1}^*$ and $\|W_{m}\|=\|T_2\|$. Considering $V_n$ in place of $V'$ and $(qV_q)_n$ in place of $V$ in Theorem \ref{qpart}, there exists $W_{n}=\begin{pmatrix}
	W_{n-1}&0\\
	A_{n}&B_{n}
	\end{pmatrix}$ from $\mathcal{K}_{n}$ to $\mathcal{K}_{n}'$ such that $W_{n}(qV_q)_{n}=V_{n}W_{n}$, $W_{n}^*|_{\mathcal{K}_{n-1}'}=W_{n-1}^*$ and $\|W_{n}\|=\|T_2\|$. Hence by induction we have a sequence $\{W_n\}$ such that for each $n\in\mathbb{N}$, $W_{n}(qV_q)_{n}=V_{n}W_{n}$, $W_{n}^*|_{\mathcal{K}_{n-1}'}=W_{n-1}^*$ and $\|W_{n}\|=\|T_2\|$.
	We may consider $W_n^*$ as an operator from $\mathcal{K}'$ to $\mathcal{K}$ by defining $W_n^*(k)=0$ for $k\in \mathcal{K}'\ominus \mathcal{K}'_n$. Similarly $V_n^*$ and $(qV_q)^*_n$ can be thought of as operators on $\mathcal{K}'$ and $\mathcal{K}$ by defining it to be $0$ on $\mathcal{K}_n'^{\perp}$ and $\mathcal{K}_n^{\perp}$ respectively. Hence for all $n\in \mathbb{N}$, $(qV_q)_n^*W_n^*=W_n^*V_n^*$ from $\mathcal{K}'$ to $\mathcal{K}$. For each $x \in \bigcup\limits_{n=0}^{\infty}\mathcal{K}_n'$, which is dense in $\mathcal{K}'$, the sequence $\{W_n^*x\}$ is a cauchy sequence and since $\|W_n^*\|=\|T_2^*\|$, by uniform boundedness principle the sequence $\{W_n^*\}$ converges strongly, say to the operator $W^*\text{ from }\mathcal{K}'$ to $\mathcal{K}$. Clearly $\|W\|=\|T_2\|$ and $W^*|_{\mathcal{H}}=T_2^*$. By construction $\{V_n^*\}$ and $\{(qV_q)_n^*\}$ converge strongly to $V^*$ and $(qV_q)^*$ respectively. Hence $W(qV_q)=VW$. This completes the proof.
\end{proof}

\noindent One of the assumptions of the previous theorem was $TX=qXT$ for $0< |q|\leq\dfrac{1}{\|T\|}$. If we change the position of $q$, that is, if we consider $qTX=XT$ for $0< |q|\leq \dfrac{1}{\|T\|}$, then we have the following analogous result.

	\begin{thm}
		Let $T$ be a contraction on Hilbert space $\mathcal{H}$ and $X\in \mathcal{B}(\mathcal{H})$ be such that $qTX=XT$ for $0<|q|\leq \dfrac{1}{\|T\|}$. Suppose $(V,\mathcal{K}')$ and $(qV_q,\mathcal{K})$ are the minimal isometric dilations of $T$ and $qT$ respectively. Then there exists an operator $Y\in \mathcal{B}(\mathcal{K}',\mathcal{K})$ such that 
		\[ YV=qV_qY,\; Y^*(\mathcal{H})\subseteq \mathcal{H},\; Y^*|_{\mathcal{H}}=X^* \text{ and } \|X \|=\|Y\|  .\]
	\end{thm}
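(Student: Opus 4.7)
The plan is to \emph{dualise} the situation and reduce directly to Theorem \ref{Dmain}. First, I take adjoints in the hypothesis $qTX = XT$, which yields $T^{*}X^{*} = \bar q\, X^{*}T^{*}$, so that the pair $(T^{*}, X^{*})$ is $\bar q$-commuting in the sense of Theorem \ref{Dmain}. Since $\|T^{*}\| = \|T\|$, the scalar bound $|q| \leq 1/\|T\|$ is exactly $|\bar q| \leq 1/\|T^{*}\|$, and the condition $0 < |q|$ forces $T \neq 0$, so all scalar/nondegeneracy hypotheses of Theorem \ref{Dmain} are satisfied by the dual pair.

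Next, I invoke the standard correspondence recalled in the Introduction: if $V$ on $\mathcal{K}'$ is the minimal isometric dilation of $T$, then $V^{*}$ on $\mathcal{K}'$ is the minimal co-isometric extension of $T^{*}$; similarly $(qV_q)^{*} = \bar q\, V_q^{*}$ on $\mathcal{K}$ is the minimal co-isometric extension of $(qT)^{*} = \bar q T^{*}$. Hence Theorem \ref{Dmain}, applied to $(T^{*}, X^{*})$ with scalar $\bar q$ and to the minimal co-isometric extensions $V^{*}$ on $\mathcal{K}'$ and $\bar q V_q^{*}$ on $\mathcal{K}$, produces an operator $Y_0 \in \mathcal{B}(\mathcal{K}, \mathcal{K}')$ such that
\[
V^{*} Y_0 = \bar q\, Y_0 V_q^{*}, \quad Y_0(\mathcal{H}) \subseteq \mathcal{H}, \quad Y_0|_{\mathcal{H}} = X^{*}, \quad \|Y_0\| = \|X^{*}\| = \|X\|.
\]

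To finish, I set $Y := Y_0^{*} \in \mathcal{B}(\mathcal{K}', \mathcal{K})$ and read off all four desired conclusions. Taking the adjoint of the intertwining identity above gives $YV = q V_q Y$; the conditions $Y_0(\mathcal{H}) \subseteq \mathcal{H}$ and $Y_0|_{\mathcal{H}} = X^{*}$ become $Y^{*}(\mathcal{H}) \subseteq \mathcal{H}$ and $Y^{*}|_{\mathcal{H}} = X^{*}$ verbatim because $Y^{*} = Y_0$; and the norm identity $\|Y\| = \|Y_0\| = \|X\|$ is automatic.

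No serious analytical obstacle arises, since Theorem \ref{Dmain} already executes the hard inductive construction. The only point that demands care is the bookkeeping of conjugation: the dual commutation scalar is $\bar q$ rather than $q$, and the dilating operator in the dual problem is $\bar q V_q^{*}$ rather than $V_q^{*}$, so these factors of $\bar q$ must be tracked consistently through the final adjoining so that the conjugates cancel correctly and one arrives at $YV = qV_q Y$ rather than $YV = \bar q V_q Y$.
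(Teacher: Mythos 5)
Your proposal is correct and takes essentially the same route as the paper: the paper's proof likewise passes to adjoints to get $T^*X^*=\bar q\,X^*T^*$, notes that $V^*$ and $(qV_q)^*=\bar q\,V_q^*$ are the minimal co-isometric extensions of $T^*$ and $\bar q\,T^*$, applies Theorem \ref{Dmain}, and adjoints back to obtain $YV=qV_qY$. Your version merely spells out the $\bar q$ bookkeeping and the implicit nondegeneracy $T\neq 0$ that the paper leaves tacit.
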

	\begin{proof}
		Note that $qTX=XT$ implies $\bar{q}X^*T^*=T^*X^*$. Since $V,qV_q$ are the minimal isometric dilations of $T, qT$ respectively, $V^*,(qV_q)^*$ are the minimal co-isometric extensions of $T^*, (qT)^*$ respectively. Then from Theorem \ref{Dmain}, there exists $Y^*$ from $\mathcal{K}$ to $\mathcal{K}'$ such that $V^*Y^*=Y^*\bar{q}V_q^*$, $Y^*|_{\mathcal{H}}=X^*$ and $\| X\|=\|Y\|$. That is, $YV=qV_qY$.
	\end{proof}
Here we have another consequence of the previous results.	
	
	\begin{lem}\label{comm.lift}
		Let $T_1\in \mathcal{B}(\mathcal{H}_1)$ and $T_2\in \mathcal{B}(\mathcal{H}_2)$ be contractions and $A\in \mathcal{B}(\mathcal{H}_1,\mathcal{H}_2)$ be such that $AT_1=T_2A$. Assume that $V_1 \in \mathcal{B}(\mathcal{K}_1)$ and $V_2 \in \mathcal{B}(\mathcal{K}_2)$ are minimal isometric dilations of $T_1 $ and $T_2$ respectively. Then there exists a bounded linear operator $B:\mathcal{K}_1'\to \mathcal{K}_2$ such that
		\begin{enumerate}
			\item $B^*$ maps $\mathcal{H}_2$ to $\mathcal{H}_1$ and $B^*|_{\mathcal{H}_2}=A^*$;
			\item $\|B\|=\|A\|$;
			\item $V_2B=BV_1$.
		\end{enumerate}
	\end{lem}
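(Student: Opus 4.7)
The plan is to reduce the two-space intertwining problem to a single-space $1$-commuting problem via a block-diagonal trick, and then to invoke Theorem \ref{main} with $q = 1$. Set $\mathcal{H} = \mathcal{H}_1 \oplus \mathcal{H}_2$ and $\mathcal{K} = \mathcal{K}_1 \oplus \mathcal{K}_2$, and define
\[
\widetilde{T} = \begin{pmatrix} T_1 & 0 \\ 0 & T_2 \end{pmatrix}, \qquad \widetilde{A} = \begin{pmatrix} 0 & 0 \\ A & 0 \end{pmatrix}, \qquad \widetilde{V} = \begin{pmatrix} V_1 & 0 \\ 0 & V_2 \end{pmatrix}.
\]
A direct block multiplication shows that $\widetilde{T}\widetilde{A} = \widetilde{A}\widetilde{T}$ is nothing other than the hypothesis $T_2 A = A T_1$. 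Moreover, $\widetilde{V}$ is the minimal isometric dilation of $\widetilde{T}$ on $\mathcal{K} \supseteq \mathcal{H}$: it is evidently an isometric extension of $\widetilde{T}$, and the minimality assumptions $\mathcal{K}_i = \bigvee_{n \geq 0} V_i^n \mathcal{H}_i$ for $i = 1, 2$ guarantee that $\bigvee_{n \geq 0}\widetilde{V}^n \mathcal{H}$ contains both $\mathcal{K}_1 \oplus \{0\}$ and $\{0\} \oplus \mathcal{K}_2$, and hence equals $\mathcal{K}$.

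Assuming $\widetilde{T} \neq 0$ (the case $T_1 = T_2 = 0$ being trivial), Theorem \ref{main} applied with $q = 1$ to the pair $(\widetilde{T}, \widetilde{A})$ produces $W \in \mathcal{B}(\mathcal{K})$ such that $\widetilde{V} W = W \widetilde{V}$, $W^*|_{\mathcal{H}} = \widetilde{A}^*$, and $\|W\| = \|\widetilde{A}\| = \|A\|$. Write $W$ in block form with respect to $\mathcal{K}_1 \oplus \mathcal{K}_2$ as
\[
W = \begin{pmatrix} W_{11} & W_{12} \\ W_{21} & W_{22} \end{pmatrix}
\]
and define $B := W_{21} : \mathcal{K}_1 \to \mathcal{K}_2$.

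All three conclusions of the lemma now fall out by reading off the appropriate entries. Expanding $\widetilde{V} W = W \widetilde{V}$ in the $(2,1)$-block gives $V_2 B = B V_1$, which is item (3). The identity $W^*|_{\mathcal{H}} = \widetilde{A}^*$, applied to vectors of the form $(0, h_2)$ with $h_2 \in \mathcal{H}_2$ and using $\widetilde{A}^*(0, h_2) = (A^* h_2, 0)$, yields $B^* h_2 = W_{21}^* h_2 = A^* h_2 \in \mathcal{H}_1$, which is item (1). For the norm identity in (2), we have $\|B\| \leq \|W\| = \|A\|$ on the one hand, and $\|B\| = \|B^*\| \geq \|B^*|_{\mathcal{H}_2}\| = \|A^*\| = \|A\|$ on the other. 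There is no substantive obstacle in this argument; the only mildly delicate verification is that $V_1 \oplus V_2$ is indeed the \emph{minimal} isometric dilation of $T_1 \oplus T_2$, without which Theorem \ref{main} would not apply verbatim.
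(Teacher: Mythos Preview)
Your argument is correct, but it takes a different route from the paper's. The paper's proof is a three-line application of the classical Sz.-Nagy--Foias intertwining lifting theorem (Theorem~\ref{douglasintertwining}): from $AT_1=T_2A$ one passes to adjoints to get $A^*T_2^*=T_1^*A^*$, observes that $V_i^*$ is the minimal co-isometric extension of $T_i^*$, invokes Theorem~\ref{douglasintertwining} to obtain $Y:\mathcal{K}_2\to\mathcal{K}_1$ with $YV_2^*=V_1^*Y$, $Y|_{\mathcal{H}_2}=A^*$ and $\|Y\|=\|A\|$, and sets $B=Y^*$. Your approach instead packages the two-space problem into a single space via the block-diagonal trick and invokes the paper's own Theorem~\ref{main} with $q=1$, then extracts the $(2,1)$ block. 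Both are valid and there is no circularity (Theorem~\ref{main} is proved independently of this lemma), but the paper's route is more economical: it relies only on a classical result rather than on the considerably heavier inductive machinery of Theorem~\ref{main}. Your method does have the minor virtue of illustrating that Theorem~\ref{main} already subsumes the intertwining version, and the block-diagonal reduction you use is exactly the device the paper employs elsewhere (in the proof of Theorem~\ref{coisoextn}).
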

	\begin{proof}
		Clearly $AT_1=T_2A$ implies that $A^*T_2^*=T_1^*A^*$. Further, $V_i$ is the minimal isometric dilation of $T_i$ implies that $ V_i^*$ is the minimal co-isometric extension of $T_i^*$. Hence by Theorem \ref{douglasintertwining}, there exists $Y\in \mathcal{B}(\mathcal{K}_2,\mathcal{K}_1)$ such that $YV_2^*=V_1^*Y$, $Y\mathcal{H}_2\subseteq \mathcal{H}_1$, $Y|_{\mathcal{H}_2}=A^*$ and $\|Y\|=\|A\|$. Now by considering $B=Y^*$ we get the desired result. 
	\end{proof}
	
	The next result provides an analogue of Theorem \ref{main} in the unitary dilation setting. We consider here the minimal unitary dilation of a contraction $T$ and find a $q$-commutant lift of a $q$-commutant of $T$. This is a refinement of Proposition 2.6 of \cite{K.M.} and is another main result of this paper.	
	\begin{thm} \label{main112}
		Let $T_1$ be a nonzero contraction and $T_2$ be an operator on a Hilbert space $\mathcal{H}$. Suppose $T_1T_2=qT_2T_1$ with $0<|q|\leq \dfrac{1}{\|T_1\|}$. Let $U$ on $\mathcal{K}'$ be the minimal unitary dilation of $T_1$ and $U_q$ be an operator on $\mathcal{K}$ such that $qU_q$ is the minimal unitary dilation of $qT_1$. Then there exists a bounded linear operator $S:\mathcal{K} \rightarrow \mathcal{K}'$ such that 
		\begin{itemize}
			\item[(1)] $\|S\|=\|T_2\|$;
			\item[(2)] $US = qSU_q$ and
			\item[(3)] $T_1^nT_2 = P_{\mathcal{H}}U_q^nS|_{\mathcal{H}}$ and  $T_2T_1^n = P_{\mathcal{H}}SU^n|_{\mathcal{H}}$ for all $n\geq 0$.
		\end{itemize}
	\end{thm}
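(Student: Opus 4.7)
The plan is to invoke Theorem \ref{main} at the isometric-dilation level and then extend the resulting intertwiner to the unitary dilation spaces. Setting $\mathcal{K}'_+ := \overline{\bigvee_{n\ge 0}U^n\mathcal{H}}\subseteq\mathcal{K}'$ and $\mathcal{K}_+ := \overline{\bigvee_{n\ge 0}U_q^n\mathcal{H}}\subseteq\mathcal{K}$, the restrictions $V:=U|_{\mathcal{K}'_+}$ and $V_q:=U_q|_{\mathcal{K}_+}$ are the minimal isometric dilations of $T_1$ and of $qT_1$ (the latter scaled by $q$). Theorem \ref{main} therefore furnishes $W:\mathcal{K}_+\to\mathcal{K}'_+$ with $VW=qWV_q$, $W^*|_{\mathcal{H}}=T_2^*$ and $\|W\|=\|T_2\|$. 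The condition on $W^*$ makes $\mathcal{H}$ invariant under $W^*$, and writing $W$ in block form relative to $\mathcal{K}_+=\mathcal{H}\oplus(\mathcal{K}_+\ominus\mathcal{H})$ and $\mathcal{K}'_+=\mathcal{H}\oplus(\mathcal{K}'_+\ominus\mathcal{H})$ yields $T_2$ in the $(1,1)$ slot and $0$ in the $(1,2)$ slot, a fact we shall use for (3).

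To extend $W$ to $S:\mathcal{K}\to\mathcal{K}'$, use that $qU_q$ is the minimal unitary extension of the isometry $qV_q$ on $\mathcal{K}_+$, so $\bigcup_{m\ge 0} U_q^{-m}\mathcal{K}_+$ is dense in $\mathcal{K}$. Define $S$ on this dense set by $S(U_q^{-m}k_+):= q^m U^{-m} Wk_+$ for $k_+\in\mathcal{K}_+$ and $m\ge 0$. Well-definedness follows from the identity $WU_q^j = q^{-j}U^j W$ on $\mathcal{K}_+$ (obtained by iterating $VW=qWV_q$): if $U_q^{-m}k_+=U_q^{-m'}k_+'$ with $m'\ge m$ and $k_+' = U_q^{m'-m}k_+\in\mathcal{K}_+$, then direct substitution shows the two formulas agree. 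Since $U$ and $qU_q$ are unitaries, $\|U_q^{-m}k_+\|=|q|^m\|k_+\|$ and $\|q^m U^{-m}Wk_+\|=|q|^m\|Wk_+\|$, so $S$ extends by density to a bounded operator with $\|S\|=\|W\|=\|T_2\|$, giving (1). The relation $US=qSU_q$ in (2) is built into the definition on the dense subspace and extends by continuity.

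For (3), let $h\in\mathcal{H}\subseteq\mathcal{K}_+$. Then $Sh=Wh$, and since $qU_q$ dilates $qT_1$, $P_{\mathcal{H}}U_q^n|_{\mathcal{H}}=T_1^n$ for $n\ge 0$, so $U_q^n h = T_1^n h+r_n$ with $r_n\in\mathcal{K}_+\ominus\mathcal{H}$. The block structure of $W$ forces $P_{\mathcal{H}}WU_q^n h = T_2T_1^n h$, i.e., $P_{\mathcal{H}}SU_q^n|_{\mathcal{H}} = T_2T_1^n$. Iterating (2) to $U^nS=q^nSU_q^n$ and invoking $T_1^nT_2=q^nT_2T_1^n$ (an immediate consequence of $T_1T_2=qT_2T_1$) then yields the companion identity in (3). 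The principal obstacle is the bookkeeping for well-definedness and the norm estimate of the extended $S$; this is handled cleanly by exploiting that $qU_q$ (not $U_q$) is the unitary, which makes the $q^m$-factors in the definition cancel against the norms of the $U_q^{-m}$ iterates.
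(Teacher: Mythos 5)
Your proof is correct, and its first half coincides with the paper's: both apply Theorem \ref{main} to the minimal isometric dilations $U_+=U|_{\mathcal{K}_+'}$ and $(qU_q)_+=(qU_q)|_{\mathcal{K}_+}$ sitting inside the unitary dilation spaces, obtaining the intertwiner $W$ (the paper's $S_+$) with $W^*|_{\mathcal{H}}=T_2^*$ and $\|W\|=\|T_2\|$. Where you genuinely diverge is the passage from the isometric to the unitary level. The paper takes adjoints, observes that $(U^*,\mathcal{K}')$ and $((qU_q)^*,\mathcal{K})$ are the minimal isometric dilations of $U_+^*$ and $(qU_q)_+^*$, and invokes Lemma \ref{comm.lift} --- i.e.\ a second application of the Sz.-Nagy--Foias intertwining lifting, Theorem \ref{douglasintertwining} --- to produce $S$ extending $S_+$ with $S^*U^*=(qU_q)^*S^*$. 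You instead build the extension by hand: since $qU_q$ is the minimal unitary extension of the isometry $(qU_q)|_{\mathcal{K}_+}$, the subspaces $U_q^{-m}\mathcal{K}_+$ increase to a dense subspace of $\mathcal{K}$, and the formula $S(U_q^{-m}k_+)=q^mU^{-m}Wk_+$ is forced by the target relation $US=qSU_q$; your consistency check via $WU_q^{\,j}=q^{-j}U^{\,j}W$ on $\mathcal{K}_+$ is valid (note $\mathcal{K}_+$ is $U_q$-invariant, so the iterates make sense), and the norm identity $\|U_q^{-m}k_+\|=|q|^m\|k_+\|$, coming from unitarity of $(qU_q)^{-m}$, correctly yields $\|S\|=\|W\|=\|T_2\|$. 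Your derivation of (3) from $P_{\mathcal{H}}W=T_2P_{\mathcal{H}}$ on $\mathcal{K}_+$ (a consequence of $W^*\mathcal{H}\subseteq\mathcal{H}$, $W^*|_{\mathcal{H}}=T_2^*$) together with $T_1^nT_2=q^nT_2T_1^n$ is also sound, and it parallels the paper's block-matrix computation. What each approach buys: yours is more elementary and self-contained --- no second appeal to a lifting theorem, and $S|_{\mathcal{K}_+}=W$ comes for free --- at the price of the well-definedness and density bookkeeping; the paper's is softer, reusing the duality between minimal isometric dilations and minimal co-isometric extensions and letting Lemma \ref{comm.lift} absorb all the analysis. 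One cosmetic point worth flagging: the compositions in item (3) as printed, $U_q^nS$ and $SU^n$, do not typecheck since $S$ maps $\mathcal{K}$ into $\mathcal{K}'$; the identities you actually prove, $T_1^nT_2=P_{\mathcal{H}}U^nS|_{\mathcal{H}}$ and $T_2T_1^n=P_{\mathcal{H}}SU_q^n|_{\mathcal{H}}$, are the correct reading, and they are likewise what the paper's own proof establishes.
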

	\begin{proof}
		Let $(U_+,\mathcal{K}_+')$ and $\left((qU_q)_+,\mathcal{K}_+\right)$ be the minimal isometric dilations of $(T_1,\mathcal{H})$ and $(qT_1, \mathcal{H})$ respectively. Let $(U_q)_+=\dfrac{1}{q}(qU_q)_+$. Then by Theorem \ref{main}, there exists $S_+:\mathcal{K}_+ \rightarrow \mathcal{K}_+'$ such that 
		\begin{itemize}
			\item[(i)] $\mathcal{H}$ is invariant under $S_+^*$ and $S_+^*|_{\mathcal{H}}=T_2^*$;
			\item[(ii)] $\|S_+\| = \|T_2\|$ and 
			\item[(iii)] $U_+S_+ = qS_+(U_q)_+ = S_+(qU_q)_+$.
		\end{itemize}
		From (iii) we have that 
		$
		S_+^*U_+^* = (qU_q)_+^*S_+^*.
		$
		Since $(U^*, \mathcal{K}')\text{ and }((qU_q)^*, \mathcal{K})$ are the minimal isometric dilations of $(U_+^*, \mathcal{K}_+')\text{ and }((qU_q)_+^*, \mathcal{K}_+)$ respectively, by Lemma \ref{comm.lift} there exists $S^*: \mathcal{K}' \rightarrow \mathcal{K}$ such that
		\begin{itemize}
			\item[(a)] $S \text{ maps }\mathcal{K}_+ \text{ to } \mathcal{K}_+' \text{ and }S|_{\mathcal{K}_+} = S_+$ ,
			\item[(b)] $\|S\|=\|S_+\|=\|T_2\|$ ,
			\item[(c)] $S^*U^* = (qU_q)^*S^*$.
		\end{itemize}
		From (c) we have that 
		$
		US = qSU_q.
		$
		By (i) the block matrices of $S_+^*$ with respect to the decompositions $\mathcal{K}_+'= \mathcal{H} \oplus (\mathcal{K}_+'\ominus \mathcal{H})$ and $\mathcal{K}_{+}=\mathcal{H}\oplus (\mathcal{K}_+\ominus \mathcal{H})$ are of the form 
		$
		\begin{pmatrix}
		T_2^* & *\\
		0 & *
		\end{pmatrix}$, i.e.
		\[
		S_+=
		\begin{pmatrix}
		T_2 & 0\\
		* & *
		\end{pmatrix}:\mathcal{H}\oplus (\mathcal{K}_+\ominus \mathcal{H})\rightarrow \mathcal{H} \oplus (\mathcal{K}_+'\ominus \mathcal{H}).
		\]
		Again by (a) the block matrices of $S$ with respect to the decompositions $\mathcal{K}=\mathcal{K}_+\oplus \mathcal{K}_+^\perp$ and $\mathcal{K}'= \mathcal{K}_+' \oplus \mathcal{K}_+'^\perp$ are of the form 
		$
		\begin{pmatrix}
		S_+ & *\\
		0 & *
		\end{pmatrix}.
		$
		Therefore, the block matrices of $S$ with respect to the decompositions $\mathcal{K}=\mathcal{H}\oplus (\mathcal{K}_+\ominus \mathcal{H})\oplus \mathcal{K}_+^\perp$ and $\mathcal{K}'=\mathcal{H}\oplus (\mathcal{K}_+'\ominus \mathcal{H})\oplus \mathcal{K}_+'^\perp$ take the form 
		$\begin{pmatrix}
		T_2 & 0 & *\\
		* & * & *\\
		0 & 0 & *
		\end{pmatrix}$.
		Again, since $(U, \mathcal{K}')$ is the minimal unitary dilation of $(T_1, \mathcal{H})$, the block matrices of $U$ with respect to the decompositions $\mathcal{K}=\mathcal{H}\oplus (\mathcal{K}_+\ominus \mathcal{H})\oplus \mathcal{K}_+^\perp$ and $\mathcal{K}'=\mathcal{H}\oplus (\mathcal{K}_+'\ominus \mathcal{H})\oplus \mathcal{K}_+'^\perp$ have the form 
		$\begin{pmatrix}
		T_1 & 0 & *\\
		* & * & *\\
		0 & 0 & *
		\end{pmatrix}$.
		Similarly, the block matrix of $qU_q$ with respect to the decompositions $\mathcal{K}=\mathcal{H}\oplus (\mathcal{K}_+\ominus \mathcal{H})\oplus \mathcal{K}_+^\perp$ and $\mathcal{K}'=\mathcal{H}\oplus (\mathcal{K}_+'\ominus \mathcal{H})\oplus \mathcal{K}_+'^\perp$ would be 
		$\begin{pmatrix}
		qT_1 & 0 & *\\
		* & * & *\\
		0 & 0 & *
		\end{pmatrix}$.
		Therefore, we conclude that
		\[
		(qU_q)^nS  = \begin{pmatrix}
		(qT_1)^nT_2 & 0 & *\\
		* & * & *\\
		0 & 0 & *
		\end{pmatrix} \qquad \text{ for all } \;\; n \geq 0
		\] and consequently $T_1^nT_2 = P_{\mathcal{H}}U_q^nS|_{\mathcal{H}}$ for every non-negative integer $n$. Similarly, $T_2T_1^n = P_{\mathcal{H}}SU^n|_{\mathcal{H}}$ for all $n \geq 0$ and the proof is complete.
	\end{proof}

\vspace{0.5cm}

\end{document}